\renewcommand{\c}{\mathscr{C}}
\newcommand{\C}{\mathbf{C}}
\newcommand{\D}{\mathbb{D}}
\newcommand{\E}{\mathbb{E}}
\newcommand{\F}{\mathscr{F}}
\newcommand{\G}{\mathscr{G}}
\newcommand{\hgt}{{\operatorname{\mathbf{ht}}}}
\renewcommand{\P}{\mathbb{P}}
\newcommand{\R}{\mathbb{R}}
\newcommand{\V}{\mathbf{V}}
\newcommand{\w}{\omega}
\newcommand{\X}{\mathbf{X}}
\newcommand{\Z}{\mathbf{Z}}
\newcommand{\csn}{\operatorname{\mathbf{csn}}}
\newcommand{\dist}{\operatorname{{dist}}}
\newcommand{\eps}{\varepsilon}
\newcommand{\fl}[1]{\lfloor #1 \rfloor}
\newcommand{\Var}{\operatorname{{Var}}}
\newcommand{\<}{\langle}
\renewcommand{\>}{\rangle}
\newcommand{\ER}{Erd\H{o}s-R\'{e}nyi }
\newtheorem{thm}{Theorem}[section]
\newtheorem{conj}[thm]{Conjecture}
\newtheorem{cor}[thm]{Corollary}
\newtheorem{lem}[thm]{Lemma}
\newtheorem{prop}[thm]{Proposition}
\theoremstyle{definition}
\theoremstyle{remark}
\newtheorem{remark}{Remark}[section]
\title[\ER graphs and Brownian motion]{A new relationship between Erd\H{o}s-R\'{e}nyi graphs, epidemic models and Brownian motion with parabolic drift}
\author{David Clancy, Jr. }
\address{Department of Mathematics, University of Washington}
\email{\href{mailto:djclancy@uw.edu}{djclancy@uw.edu}}
\begin{document}
\begin{abstract}
	In the Reed-Frost model, an example of an SIR epidemic model, one can examine a statistic that counts the number of concurrently infected individuals. This statistic can be reformulated as a statistic on the \ER random graph $G(n,p)$. Within the critical window of Aldous \cite{A97_1} and Martin-L\"{o}f \cite{MartinLof98}, i.e. when $p = p(n) = n^{-1}+\lambda n^{-4/3}$, this statistic converges weakly to a Brownian motion with parabolic drift stopped upon reaching a level. The same statistic exhibits a deterministic scaling limit when $p = (1+\lambda \eps_n)/n$ whenever $\eps_n\to 0$ and $n^{1/3}\eps_n\to\infty$. 
\end{abstract}	

\keywords{Epidemic models, Reed-Frost model, Erd\H{o}s-R\'{e}nyi random graphs, Lamperti transformation, scaling limits, Brownian motion with parabolic drift}
\subjclass[2010]{60C05, 60F17, 92D30}
\maketitle

\section{Introduction}

In this paper we provide a new relationship between an \ER random graph $G(n,p)$ when $n\to\infty$ with $p = p(n) = n^{-1}+\lambda n^{-4/3}$ and a Brownian motion with parabolic drift, $\X^\lambda = (\X^\lambda(t);t\ge 0)$, defined by
\begin{equation}\label{eqn:xlamb}
\X^\lambda(t) = B(t)+\lambda t - \frac{1}{2} t^2, 
\end{equation} for a standard Brownian motion $B$. The connection between this asymptotic regime and a Brownian motion with parabolic drift dates back to Aldous' work in \cite{A97_1} and the independent work of Martin-L\"{o}f \cite{MartinLof98}. The latter reference relies on the connection between \ER random graphs and the so-called Reed-Frost model for epidemics. The results presented below have implications for the Reed-Frost model as well.

The Reed-Frost model is an SIR model - that is individuals are either Susceptible to the disease, Infected with the disease, or have Recovered from the disease (sometimes called Removed). At time $t = 0$, there is some number of initially infected individuals $I_0$ in a population of size $n$. Consequently, there are $S_0 = n-I_0$ susceptible individuals. At time $t = 0,1,2,\dotsm$, each of the $I_t$ infected individuals infects each of the $S_t$ susceptible individuals with probability $p$. The susceptible individuals who become infected at time $t$ make up the $I_{t+1}$ infected individuals at time $t+1$. The connection between the Reed-Frost model and \ER random graph $G \sim G(n,p)$ is explained in \cite{BM90}. In brief, the initially infected individuals are uniformly selected vertices without replacement. The neighbors of infected individuals at time $t$, who have not already been infected, become the infected individuals at time $t+1$.

The structure of large random graphs has been an object of immense research dating back to the 1960s. One of the simplest models is the \ER random graph $G(n,p)$ on $n$ vertices where each of the $\displaystyle \binom{n}{2}$ possible edges is independently added with probability $p$. In their original work \cite{ER60}, Erd\H{o}s and R\'{e}nyi show that if $p = p(n) = c/n$ for some constant $c$ then the following phase shift occurs
\begin{enumerate}
\item if $c<1$ the largest component is of order $\Theta(\log n)$;
\item if $c>1$ the largest component is of order $\Theta(n)$ and the second largest component is of order $\Theta(\log n)$;
\item if $c = 1$ then the two largest components are of order $\Theta(n^{2/3})$. 
\end{enumerate} This has a corresponding interpretation for Reed-Frost model: the largest components of the \ER graph represent the size of largest possible outbreaks in the Reed-Frost model when only a single individual is initially infected. 

Much interest has been paid to the phase shift that occurs at and around $c =1$. In the critical window $p(n) = n^{-1}+\lambda n^{-4/3}$ for a real parameter $\lambda$, the size of the components of the random graph were established in \cite{A97_1} and are related to the excursion lengths of a Brownian motion with parabolic drift. More formally, let $\X^\lambda =  \left(\X^\lambda(t);t\ge0\right)$ be a Brownian motion with parabolic drift defined by \eqref{eqn:xlamb}, and let $\gamma^\lambda(1)\ge \gamma^\lambda(2)\dotsm$ denote the lengths of the excursions of $\X^\lambda$ above its past infimum ordered by decreasing lengths. Then if $\c_n(1),\c_n(2),\dotsm$ are the components of $G(n,n^{-1}+\lambda n^{-4/3})$ ordered by decreasing cardinality there is convergence in distribution
\begin{equation}\label{eqn:compSize}
\left( n^{-2/3} \# \c_n(1), n^{-2/3} \# \c_n(2),\dotsm\right) \Longrightarrow \left(\gamma^\lambda(1),\gamma^\lambda (2),\dotsm \right)
\end{equation} with respect to the $\ell^2$-topology. 

The results in this paper are motivated by a question posed by David Aldous to the author during a presentation of the author's results in \cite{Clancy19}. The main results of \cite{Clancy19} relate the scaling limit of two statistics on a random forest model to the integral of an encoding L\'{e}vy process without negative jumps. The connection relies on a breadth-first exploration of the random forest. Aldous \cite{A97_1} used a breadth-first exploration to obtain the relationship between the \ER random graph $G(n,n^{-1}+\lambda n^{-4/3})$. Aldous asked if there was some relationship between analogous statistics on the graph $G(n,n^{-1}+\lambda n^{-4/3})$ and the integral of the Brownian motion with parabolic drift in equation \eqref{eqn:xlamb}. The answer to the question is yes and is provided with Theorem \ref{thm:kconv} and Theorem \ref{thm:kconv_gen} below.

\subsection{Statement of Results}\label{sec:statements}

Fix a real parameter $\lambda$, and define $\G_n = G(n,n^{-1}+\lambda n^{-4/3})$. Fix a $k\le n$ and uniformly choose $k$ vertices without replacement in the \ER graph $\G_n$, and denote these by $\rho_n(1),\rho_n(2),\dotsm, \rho_n(k)$. Let $\dist(-,-)$ denote the graph distance on $\G_n$ with the convention $\dist(w,v) = \infty$ if $w$ and $v$ are in distinct connected components. 

For each vertex $v\in \G_n$, define the height of a vertex, denoted by $\hgt^k_n(v)$, by
\begin{equation*}
\hgt^k_n(v) = \min_{j\le k} \dist(\rho_n(j),v).
\end{equation*} 

We remark that applying a uniformly chosen permutation to the vertex labels in an \ER graph $G(n,p)$ gives an identically distributed random graph and so we could take the vertices $\{\rho_n(1),\dotsm, \rho_n(k)\}$ to simply be the vertices $\{1,\dotsm,k\}$. With this observation, using $\{\rho_n(1),\dotsm, \rho_n(k)\}$ instead of $\{1,2,\dotsm,k\}$ may seem like an unnatural choice in terms of the \ER random graph. If we instead think of the corresponding SIR epidemic model -- more specifically the Reed-Frost model --  this choice becomes much more natural. Indeed, these vertices $\rho_n(1),\dotsm, \rho_n(k)$ become the $k$ initially infected individuals in a population of size $n$.

We define the process $Z_n^k = \left( Z_n^k(h);h = 0,1,\dotsm\right)$ by 
\begin{equation}\label{eqn:zDiscrete}
Z_n^k(h) = \# \{v\in \G_n: \hgt^k_n(v) = h\}.
\end{equation} In words, $Z_n^k(h)$ is the number of vertices at distance exactly $h$ from the $k$ uniformly chosen vertices $\rho_n(1),\dotsm, \rho_n(k)$. In terms of the corresponding SIR model, $Z_n^k(h)$ represents the number of individuals infected at ``time'' $h$ when $k$ individuals are infected at time $0$. 

The statistic we examine measures how many vertices in $\G_n$ are at the same distance from the $k$ uniformly chosen vertices. Namely, given a $k\le n$ and a vertex $v\in \G_n$ we define the statistic
\begin{equation*}
\csn^k_n(v) = \#\{w\in \G_n: \hgt_n^k(v) = \hgt_n^k(w)\},
\end{equation*} and call this the \textit{cousin statistic}. In a random forest model where a genealogical interpretation is more natural, the statistic was used in \cite{Clancy19} to count the number of ``cousin vertices.'' In the graph context this statistic seems like an unnatural choice. If we instead think of the epidemic model as described in the second paragraph of the introduction, then $\csn_n^k(v)$ becomes much more natural. The value of $\csn_n^k(v)$ represents the number of people infected at the same instance that individual $v$ is infected when $k$ individuals are infected at time $0$ and the total population is exactly $n$. 

Before discussing a scaling limit involving the cousin statistic, we introduce a labeling of the vertices
$$
\{v\in \G_n: \hgt_n^k(v)<\infty\},
$$ i.e. the vertices connected to one of the randomly chosen vertices $\rho_n(1),\dotsm, \rho_n(k)$. We label these vertices $w_n^k(0),w_n^k(1),\dotsm$ in any way that $j\mapsto \hgt_n^k(w_n^k(j))$ is non-decreasing. One such way is by first setting $w_n^k(0) = \rho_n(1)$, $w_n^k(1) = \rho_n(2)$, $\dotsm, w_n^k(k-1) = \rho_n(k)$. and then assigning labels inductively so that the unlabeled neighbors of $w_n^k(i)$ are assigned labels before the unlabeled neighbors of $w_n^k(j)$ for $i<j$. Since $\csn_n^k(w_n^k(j))$ only depends on the height $\hgt(w_n^k(j))$, the specific ordering of neighbors within the same height is not of much importance. In terms of the epidemic model that we've mentioned several times already, the ordering $w_n^k(0), w_n^k(1),\dotsm$ orders the total number of infected individuals in terms of who got infected first. 

We define the cumulative cousin process
\begin{equation} \label{eqn:kDef}
K_n^k(j) = \sum_{i=0}^{j-1} \csn_n^k(w_n^k(i)).
\end{equation} Eventually there will be no vertex labeled $w_n^k(i)$ in the graph, i.e. we have exhausted all vertices in a connected component containing $\rho_n(k)$. At which point we just define $\csn_n^k(w_n^k(i)) = 0$. 
The following theorems describe the scaling limit of the cousin statistic $\csn$ and the cumulative sum $K_n^k$. 
In the regime studied by Aldous \cite{A97_1}:
\begin{thm}\label{thm:kconv} Fix a $\lambda\in \R$ and consider the graph $\G_n = G(n,n^{-1}+\lambda n^{-4/3})$.
	Fix an $x>0$ and let $k = k(n,x) = \fl{n^{1/3}x}$. Then the following convergence holds in the Skorohod space $\D(\R_+,\R_+)$:
	\begin{equation}\label{eqn:csnconv}
	\left(n^{-1/3}\csn_n^k(w_n^k(\fl{n^{2/3}t}));t\ge 0 \right) \Longrightarrow \left(x + \X^\lambda(t\wedge T_{-x}) ;t \ge 0 \right),
	\end{equation} where $\X^\lambda$ is a Brownian motion with parabolic drift in \eqref{eqn:xlamb} and $T_{-x} = \inf\{t: \X^\lambda (t) = -x\}$. 
\end{thm}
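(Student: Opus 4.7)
The plan is to approximate the cousin statistic by the breadth-first search (BFS) queue process and transfer the classical Aldous scaling limit. Perform a BFS of $\G_n$ whose initial queue is $\rho_n(1),\dotsc,\rho_n(k)$, producing the vertex ordering $w_n^k(0),w_n^k(1),\dotsc$. Let $\eta_i$ be the number of unexplored neighbors discovered when processing $w_n^k(i)$, and set
\[
Q_n^k(j) = k + \sum_{i=0}^{j-1}(\eta_i - 1),
\]
the BFS queue size after $j$ steps (stopped the first time it vanishes). The key structural observation is that at the start of each generation the queue holds exactly the height-$h$ vertices, so $Q_n^k(A_n^k(h)) = Z_n^k(h)$, where $A_n^k(h) = \sum_{h'<h}Z_n^k(h')$. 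Since by definition $\csn_n^k(w_n^k(j)) = Z_n^k(\hgt_n^k(w_n^k(j)))$, the cousin statistic equals the queue exactly at generation boundaries and is piecewise constant between them.

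For the first step I would extend Aldous's BFS argument in \cite{A97_1} from $k=1$ to general $k = \fl{n^{1/3}x}$ to obtain
\[
n^{-1/3}Q_n^k(\fl{n^{2/3}t}) \Longrightarrow x + \X^\lambda(t\wedge T_{-x})
\]
in $\D(\R_+,\R_+)$. Conditional on the BFS history through step $i$, $\eta_i$ is Binomial with parameters equal to the number of unexplored vertices not currently in the queue and success probability $p$. The initial value $n^{-1/3}Q_n^k(0) = n^{-1/3}\fl{n^{1/3}x} \to x$ gives the starting point $x$, the drift computation of Aldous recovers the $\lambda t - t^2/2$ correction, and the first-emptying time of the BFS queue matches the limiting hitting time $T_{-x}$ (exhaustion of the union of components containing $\rho_n(1),\dotsc,\rho_n(k)$).

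For the second step, write $j = A_n^k(h) + m$ with $0\le m < Z_n^k(h)$ to get
\[
Q_n^k(j) - \csn_n^k(w_n^k(j)) = \sum_{i=0}^{m-1}\bigl(\eta_{A_n^k(h)+i}-1\bigr),
\]
a nearly centered random walk of length at most $Z_n^k(h)$. From the scaling limit of $Q_n^k$ one has the a priori bound $\sup_h Z_n^k(h) = O_{\P}(n^{1/3})$, the conditional mean of $\eta_i - 1$ in the critical regime is $O(n^{-1/3})$, and the conditional variance is $O(1)$. A Doob / Kolmogorov maximal inequality then bounds the within-generation fluctuation by order $n^{1/6}$ with high probability, hence $o(n^{1/3})$ after rescaling, so the scaled cousin process and the scaled queue have the same limit and \eqref{eqn:csnconv} follows by the continuous mapping theorem (the limit is continuous so Skorohod convergence reduces to uniform convergence on compacts).

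The main obstacle is making the within-generation approximation uniform over all $j \le n^{2/3}M$: the window contains $\Theta(n^{1/3})$ generations, so a pointwise $n^{1/6}$ bound is not immediately enough for a union bound. One needs concentration on $\eta_i - 1$ uniformly along the BFS, via stochastic domination by a Binomial of parameter $np\approx 1$ together with a truncation argument ruling out atypically large generations, to upgrade the maximal inequality to an exponential tail and conclude
\[
\sup_{j\le n^{2/3}M}\bigl|Q_n^k(j)-\csn_n^k(w_n^k(j))\bigr| = o_{\P}(n^{1/3}).
\]
Granting this uniform control, the full Skorohod-space statement \eqref{eqn:csnconv} is an immediate consequence of the scaling limit for $Q_n^k$.
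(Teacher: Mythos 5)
Your proposal is correct in outline, but it takes a genuinely different route from the paper. The paper never touches the breadth-first queue walk directly: it first proves a scaling limit for the height profile $Z_n^k(h)$ on the generation time scale $h\sim n^{1/3}$ (Lemma \ref{thm:zconv}, a CSBP-with-interaction SDE obtained via the martingale FCLT and imported from the epidemic literature), then uses the exact identity $\csn_n^k(w_n^k(j))=Z_n^k(V_n^k(j))$ with $V_n^k$ the inverse of $C_n^k$, and inverts this Lamperti-type time change in the limit to identify $\Z(S_t(\C))=x+\X^\lambda(t\wedge T_{-x})$ (Lemma \ref{lem:uniquenessLemma}). You instead work entirely on the vertex-index time scale: you extend Aldous's queue-walk convergence to $k=\fl{n^{1/3}x}$ roots and observe that $\csn_n^k(w_n^k(j))$ agrees with the queue at generation boundaries and differs from it in between by a one-generation increment of the walk. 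Your structural identity $Q_n^k(A_n^k(h))=Z_n^k(h)$ is correct, and your approach buys a more transparent appearance of $\X^\lambda$ while avoiding the SDE existence/uniqueness machinery and the delicate continuity arguments for the inverse time change (Steps 2--4 of the paper's proof); the paper's route, in exchange, produces Lemma \ref{thm:zconv} and the generalization to the wider critical window of Section \ref{sec:theta} as reusable byproducts. One remark on the obstacle you flag: the uniformity over $\Theta(n^{1/3})$ generations does not require exponential concentration or a union bound. Since $Q_n^k(j)-\csn_n^k(w_n^k(j))$ is an increment of $Q_n^k$ over a window of at most $\sup_h Z_n^k(h)+1=O_{\P}(n^{1/3})$ steps, i.e.\ of rescaled duration $O_{\P}(n^{-1/3})\to 0$, and since the rescaled queue walk converges to a process with continuous paths (hence is $C$-tight), its uniform modulus of continuity over vanishing windows tends to zero in probability; this gives $\sup_{j\le n^{2/3}M}|Q_n^k(j)-\csn_n^k(w_n^k(j))|=o_{\P}(n^{1/3})$ directly. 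With that, and a short verification that the discrete queue-emptying time converges to $T_{-x}$ (which holds because $\X^\lambda$ a.s.\ passes strictly below $-x$ immediately after $T_{-x}$), your argument is complete.
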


In a more general view of the critical window, which has been studied in, for example, \cite{DKLP10,DKLP11,Luczak98,RW10}, we have the following theorem
\begin{thm}\label{thm:kconv_gen}

	Consider the Erd\H{o}s-R\'{e}nyi random graph $\G_n^\eps:= G(n,(1+\lambda\eps_n)/n)$, where $\eps_n>0$, and $\eps_n\to 0$ but $\eps_n^3 n\to\infty$. Let $k = k(n,x) = \fl{\eps_n^2 n x}$. Then, for this sequence of graphs, the following convergence holds on the Skorohod space
	\begin{equation*}
\left(n^{-1}\eps_n^{-2} \csn_n^k(w_n^k(\fl{n\eps_n t}));t\ge0 \right) \Longrightarrow \left((x+\lambda t - \frac{1}{2}t^2)\vee 0; t\ge0 \right).
	\end{equation*}

\end{thm}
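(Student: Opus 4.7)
The first step is to reduce the statement about the cousin statistic to one about the level-size process $Z_n^k$ and its cumulative sum $K_n^k$. Since $\csn_n^k(w_n^k(j))$ depends only on the height of $w_n^k(j)$, and the BFS labeling convention places $w_n^k(j)$ at height $h$ precisely when $K_n^k(h)\le j<K_n^k(h+1)$, it suffices to establish joint scaling limits for $Z_n^k$ and $K_n^k$ and then compose with the inverse of $K_n^k$. On $\G_n^\eps$, $Z_n^k$ is the Reed-Frost infection chain: conditionally on the $\sigma$-field $\F_h$ generated by the history,
$$Z_n^k(h+1)\;\sim\;\mathrm{Bin}\bigl(n-K_n^k(h+1),\;1-(1-p)^{Z_n^k(h)}\bigr),\qquad Z_n^k(0)=k,$$
with $p=(1+\lambda\eps_n)/n$.

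Because the correct scales are $Z_n^k(h)\sim n\eps_n^2$, $K_n^k(h)\sim n\eps_n$, and the height is measured in units of $1/\eps_n$, the critical-window fluctuations behind Theorem~\ref{thm:kconv} wash out and a pure law of large numbers is expected. A direct first-moment expansion of the Reed-Frost recursion, in the rescaled variables $y_n(u):=n^{-1}\eps_n^{-2}Z_n^k(\lfloor u/\eps_n\rfloor)$ and $T_n(u):=n^{-1}\eps_n^{-1}K_n^k(\lfloor u/\eps_n\rfloor)$, gives the mean drift
$$\E\bigl[Z_n^k(h+1)\mid \F_h\bigr]\big/Z_n^k(h)=1+\bigl(\lambda-T_n(h\eps_n)\bigr)\eps_n+O(\eps_n^2),$$
which discretizes the coupled ODE system $(\log y)'(u)=\lambda-T(u)$, $T'(u)=y(u)$, $y(0)=x$, $T(0)=0$. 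The change of variables $Y(t):=y(T^{-1}(t))$ reduces this to $Y'(t)=\lambda-t$, $Y(0)=x$, whose solution is the parabola $Y(t)=x+\lambda t-t^2/2$ on $[0,t_+]$ with $t_+=\lambda+\sqrt{\lambda^2+2x}$. Rigorously, I would analyze the multiplicative martingale $M_h:=\log Z_n^k(h)-\sum_{h'<h}\E\bigl[\log(Z_n^k(h'+1)/Z_n^k(h'))\mid\F_{h'}\bigr]$; its predictable quadratic variation is $O\bigl(\sum 1/Z_n^k(h')\bigr)$, which over the $O(1/\eps_n)$ generations before extinction totals $O(1/(n\eps_n^3))\to 0$ precisely under the hypothesis $n\eps_n^3\to\infty$. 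Doob's maximal inequality then yields uniform-on-compacts convergence $y_n\to y$ and $T_n\to T$ in probability, and inverting and composing gives convergence of $n^{-1}\eps_n^{-2}\csn_n^k(w_n^k(\lfloor n\eps_n t\rfloor))$ to $Y(t)$ on $[0,t_+)$.

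The extension to all $t\ge 0$, and the $\vee\,0$ in the statement, follow from the observation that the cluster of the $k$ initial vertices has total size $(1+o(1))n\eps_n t_+$ by integrating $y(u)\,du=dt$ over the lifetime of the ODE, so for $t>t_+$ the vertex $w_n^k(\lfloor n\eps_n t\rfloor)$ either lies outside the cluster (where $\csn_n^k$ is taken to be $0$ by convention) or is undefined. The main technical obstacle is controlling the Reed-Frost chain near the extinction boundary $t\uparrow t_+$: there $Y(t)\downarrow 0$ with nonzero slope, so the per-generation multiplicative fluctuation bound $O(1/Z_n^k(h))$ blows up. I would handle this by a stopping-plus-comparison argument: prove the ODE convergence on $[0,t_+-\delta]$ uniformly, then use that after height $\lfloor(t_+-\delta)/\eps_n\rfloor$ the effective reproduction mean is strictly below $1$ and compare the remaining chain to a subcritical branching process to show that it dies in $o(1/\eps_n)$ generations involving $o(n\eps_n)$ further vertices, and finally let $\delta\downarrow 0$.
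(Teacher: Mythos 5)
Your proposal is correct in outline and follows the same two-stage architecture as the paper: first a law of large numbers for the height profile $Z_n^k$ in the rescaled variables, then a transfer to $\csn_n^k$ by composing with the (generalized) inverse of the cumulative process $C_n^k(h)=\sum_{j\le h}Z_n^k(j)$ and using the identity $\csn_n^k(w_n^k(j))=Z_n^k(h)$ for $j$ in the $h$-th block; the second stage is exactly Steps 1--4 of the paper's proof of Theorem \ref{thm:kconv}, rerun with the new scaling. Where you differ is in the engine for the first stage. The paper proves the analogue of Lemma \ref{thm:genz} via the additive Doob decomposition, the binomial moment asymptotics of Lemma \ref{lem:asympt2}, and the Ethier--Kurtz Chapter 7 machinery (Lemma \ref{lem:ekVerify2}), where the deterministic limit comes from the rescaled predictable quadratic variation $\tilde Q_n^{\theta,k}$ vanishing -- note that this vanishing is again exactly the condition $n\eps_n^3\to\infty$, the same place it enters your bound $\sum 1/Z_n^k(h')=O(1/(n\eps_n^3))$. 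Your multiplicative (logarithmic) martingale plus Doob's maximal inequality is more elementary and self-contained, but it forces you to confront two issues that the paper's localization handles automatically: (i) $\log Z_n^k$ is undefined at extinction and the Taylor expansion of the log requires truncating the binomial fluctuations, so you must stop the chain before $Z_n^k$ leaves the window $[\delta n\eps_n^2, r n\eps_n^2]$; and (ii) for $t>t_+=\lambda+\sqrt{\lambda^2+2x}$ you need an \emph{upper} bound on the cluster size and on $\sup_{h\ge H}Z_n^k(h)$ for large $H$, which the paper extracts from the full-$\R_+$ Skorohod convergence of Lemma \ref{thm:genz} together with the first-passage map $S_t$ (its Proposition \ref{prop:components} only records the matching lower bound). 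Your subcritical branching comparison past height $(t_+-\delta)/\eps_n$ is a legitimate way to supply both, so the plan closes, but that step is a genuine piece of work rather than a remark.
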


The proof of the Theorem \ref{thm:kconv} above can be found in Section \ref{sec:klim} and the proof of Theorem \ref{thm:kconv_gen} can be found in Section \ref{sec:theta}. The proof relies heavily scaling limit for the process $Z_n^k$ and a time-change argument similar to the Lamperti transform. For the critical window in Theorem \ref{thm:kconv}, the scaling limit is known in the literature for continuous time epidemic models \cite{DL06} and \cite{Simatos15}. See also, \cite[Appendix 2]{vBML80}. We state it as the following lemma.
\begin{lem}[\cite{DL06,Simatos15}] \label{thm:zconv}
Fix an $x>0$ and let $k = k(n) = \fl{n^{1/3}x}$. Then, as $n\to\infty$, the following weak convergence holds in the Skorohod space $\D(\R_+,\R_+)$
\begin{equation*}
\left(n^{-1/3} Z_n^{k(n,x)}(\fl{n^{1/3}t}) ;t \ge 0\right) \Longrightarrow \left(\Z(t);t\ge 0 \right),
\end{equation*} where $\Z$ is the unique strong solution of the following stochastic equation
\begin{equation}\label{eqn:zsde1}
\Z(t) = x + \int_0^t \sqrt{\Z(s)}\,dW(s) + \left( \lambda - \frac{1}{2} \int_0^t \Z(s)\,ds\right)\int_0^t\Z(s)\,ds,
\end{equation} which is absorbed upon hitting zero and $W$ is a standard Brownian motion.
\end{lem}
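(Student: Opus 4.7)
The plan is to exploit the fact that $Z_n^k$ is a Markov chain with explicit binomial transitions. Conditional on $\F_h := \sigma(Z_n^k(0), \ldots, Z_n^k(h))$, one has $Z_n^k(h+1) \mid \F_h \sim \mathrm{Binomial}(S_h, 1-(1-p)^{Z_n^k(h)})$, where $S_h = n - k - \sum_{i=1}^h Z_n^k(i)$ is the number of remaining susceptibles and $p = n^{-1} + \lambda n^{-4/3}$. Writing the Doob decomposition $Z_n^k(h+1) - Z_n^k(h) = D_{h+1} + M_{h+1}$, with predictable drift $D_{h+1}$ and martingale difference $M_{h+1}$, reduces matters to analyzing the rescaled sums $\bar D_n(t) := n^{-1/3}\sum_{h < \fl{n^{1/3} t}} D_{h+1}$ and $\bar M_n(t) := n^{-1/3}\sum_{h < \fl{n^{1/3} t}} M_{h+1}$ together with the scaled process $\bar Z_n(s) := n^{-1/3} Z_n^k(\fl{n^{1/3} s})$.

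First I would Taylor-expand $1-(1-p)^z = zp - \binom{z}{2}p^2 + O((zp)^3)$ and use the critical scaling. This gives
\begin{equation*}
D_{h+1} = Z_n^k(h)\left[\tfrac{S_h}{n}(1 + \lambda n^{-1/3}) - 1\right] + O(n^{-1/3});
\end{equation*}
substituting $S_h/n = 1 - k/n - n^{-1}\sum_{i=1}^h Z_n^k(i)$ and summing over $h < \fl{n^{1/3} t}$ yields
\begin{equation*}
\bar D_n(t) = \int_0^t \bar Z_n(s)\left[\lambda - \int_0^s \bar Z_n(u)\,du\right] ds + o(1).
\end{equation*}
Likewise, $\Var(Z_n^k(h+1) \mid \F_h) = Z_n^k(h)(1 + o(1))$, so the predictable quadratic variation satisfies $\langle \bar M_n \rangle_t = \int_0^t \bar Z_n(s)\,ds + o(1)$, and the jumps of $\bar M_n$ are of size $n^{-1/3}$.

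Next I would establish tightness of $(\bar Z_n)$ in $\D(\R_+, \R_+)$ via Aldous' criterion applied to this semimartingale decomposition, after obtaining a uniform $L^2$ bound on $\bar Z_n$ over compact time intervals by coupling the exploration with a slightly supercritical Galton--Watson branching process. For any weak subsequential limit $\Z$, the martingale functional CLT (e.g.\ Theorem 7.1.4 of Ethier--Kurtz) identifies the martingale part as $\int_0^t \sqrt{\Z(s)}\,dW(s)$ for a standard Brownian motion $W$, while the convergence of $\bar D_n$ pins down the drift and yields \eqref{eqn:zsde1}. Pathwise uniqueness for the limiting SDE follows by a Yamada--Watanabe argument applied to the joint process $(\Z, A)$ with $A(t) := \int_0^t \Z(s)\,ds$: the diffusion coefficient $\sqrt{z}$ is $1/2$-H\"older, and conditionally on the past the drift is Lipschitz in $(\Z(t), A(t))$. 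Absorption at $0$ matches the termination of the breadth-first exploration because both the drift and diffusion coefficients vanish there.

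The hard part will be the \emph{a priori} control of $\bar Z_n$, uniformly on compact time intervals and \emph{up to} the exploration's extinction time. This is needed both to justify the Taylor expansion of the drift (the cubic error $O((Z_n^k(h) p)^3)$ must remain summable after rescaling) and to obtain the tightness estimate. A second delicate point is matching the absorption at $0$: when $\bar Z_n$ is close to zero both the drift and diffusion become small and the chain can persist for macroscopic time, so a careful stopping argument is needed to show that the discrete extinction times converge to those of the limit SDE without the Skorohod limit ``losing mass'' prematurely.
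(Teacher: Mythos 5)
Your proposal is correct and follows essentially the same route as the paper: the binomial transition structure \eqref{eqn:condDist}, uniform moment asymptotics for $\mathrm{Bin}(n-c,q(n,z))$ over the relevant range of $(z,c)$ (Lemma \ref{lem:asympt}), a Doob decomposition whose drift and predictable quadratic variation converge to $\int_0^t(\lambda-\C(s))\Z(s)\,ds$ and $\int_0^t\Z(s)\,ds$ (Lemma \ref{lem:ekVerify}), the martingale FCLT of Ethier--Kurtz to identify subsequential limits as weak solutions, and Yamada--Watanabe for the pair $(\Z,\C)$ to conclude (Lemma \ref{lem:uniquenessLemma}). The one place you diverge is the step you flag as ``the hard part'': you propose global a priori $L^2$ control via a coupling with a supercritical Galton--Watson process, followed by Aldous' tightness criterion. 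The paper instead localizes from the outset, working with the stopping times $\tau_n^k(r)=\inf\{t:\tilde Z_n^k(t)\vee\tilde Z_n^k(t-)\ge r\}$ and proving all moment bounds only on the region $\Omega_n=\{z\le n^{1/3}r,\ c\le n^{2/3}Tr\}$; tightness and identification of the stopped processes then come directly from Theorem 7.4.1 of Ethier--Kurtz, and one removes the localization by letting $r\to\infty$ using that $\alpha(r)\to\infty$ for the limit SDE. This buys you a complete avoidance of global moment estimates and of the GW comparison. Your second worry, about absorption at $0$ and the discrete chain ``lingering'' at small positive values, is also less delicate than you suggest for the statement as given: the discrete chain is genuinely absorbed the instant $Z_n^k(h)=0$, lingering at $o(n^{1/3})$ values does not obstruct $J_1$-convergence to a path that sits at $0$, and uniqueness for \eqref{eqn:zeqn.2} forces any limit to remain at $0$ after first hitting it since both coefficients vanish there; the genuinely delicate convergence of the extinction time itself is only needed later (Corollary \ref{cor:Cor1}), where the paper handles it via the Lamperti time change and Lemma \ref{lem:compactsupport}.
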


In Section \ref{sec:z} we provide proofs of the lemmas needed to go from the continuous time statements in \cite{DL06,Simatos15} to the formulation in Lemma \ref{thm:zconv}. These results we prove will be used in Section \ref{sec:theta} to argue a similar scaling result as in Lemma \ref{thm:zconv} in a more general critical window. We do provide a proof the strong existence and uniqueness of solutions to the stochastic equation \eqref{eqn:zsde1} with Lemma \ref{lem:uniquenessLemma}. 

We also argue the following proposition for the number of vertices in the connected subset of the graph connected to one of the $k$ randomly chosen vertices (cf \cite[Theorem 1]{MartinLof98}).
\begin{prop}\label{prop:components}
	Let $k = k(n) = \fl{n \eps_n^2 x}$ where $\eps_n$ satisfies \eqref{eqn:thetan}. Let $A_n^\eps(k)$ denote the number of vertices in $\G_n^\eps = G(n,(1+\lambda \eps_n)/n)$ which are in the same connected component as some vertex in $\{1,2,\dotsm k\}$. Then if $\eps_n\to 0$ but $n\eps_n^3\to\infty$, for each $\eta>0$,
	\begin{equation*}
	\P\left(n^{-1/3} \eps_n A_n^\eps(k) >  \lambda+ \sqrt{\lambda^2+2x}-\eta \right) \longrightarrow 1,\qquad \text{as }n\to\infty.
	\end{equation*}
\end{prop}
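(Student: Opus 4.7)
Proposition \ref{prop:components} is essentially a corollary of Theorem \ref{thm:kconv_gen}, obtained by identifying $A_n^\eps(k)$ with the first index at which the cousin process vanishes. Since the labeling $w_n^k(0), w_n^k(1), \dots$ enumerates without repetition precisely the vertices at finite height from $\{1,\dots,k\}$, adopting the natural convention $\csn_n^k(w_n^k(j)) := 0$ once $j$ exceeds the size of the explored set yields the hitting-time representation
\begin{equation*}
A_n^\eps(k) = \min\{j \ge 0 : \csn_n^k(w_n^k(j)) = 0\}.
\end{equation*}

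On the limit side, the process $\phi(t) := (x + \lambda t - t^2/2) \vee 0$ appearing in Theorem \ref{thm:kconv_gen} is strictly positive on $[0, t^*)$ and identically zero on $[t^*, \infty)$, where $t^* := \lambda + \sqrt{\lambda^2 + 2x}$ is the larger root of $x + \lambda t - t^2/2 = 0$. Fix $\eta > 0$; we may assume $\eta < t^*$ since otherwise the claim is trivial. Set $s := t^* - \eta/2$, so that $\phi(s) > 0$ and $s$ is a continuity point of $\phi$. Theorem \ref{thm:kconv_gen} evaluated at $s$, combined with the portmanteau theorem, gives
\begin{equation*}
\P\bigl(\csn_n^k(w_n^k(\fl{n\eps_n s})) > 0\bigr) \longrightarrow 1.
\end{equation*}
By the hitting-time representation this forces $A_n^\eps(k) > \fl{n\eps_n s}$ with high probability, and since $n\eps_n\to\infty$ (a consequence of $\eps_n\to 0$ and $n\eps_n^3\to\infty$) we may absorb the floor to deduce $A_n^\eps(k) > n\eps_n(t^*-\eta)$ with probability tending to one. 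Rescaling produces the claimed lower bound.

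\textbf{Main obstacle.} All the analytic work is housed in Theorem \ref{thm:kconv_gen}; given that theorem, the reduction above is little more than bookkeeping. The only point needing actual care is that the extension-by-zero convention for $\csn_n^k$ past the exhaustion index is consistent with the formulation of Theorem \ref{thm:kconv_gen}, so that the one-dimensional marginal invoked above genuinely approximates $\phi(s)$; this should be automatic from the Lamperti-type time change used to prove the theorem in Section \ref{sec:theta}, but it is the one place where one needs to check that nothing has been lost in passing from the random-forest picture to the graph picture.
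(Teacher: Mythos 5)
Your proof is correct, but it follows a genuinely different route from the paper's. The paper derives the proposition directly from Lemma \ref{thm:genz} rather than from Theorem \ref{thm:kconv_gen}: it writes $A_n^\eps(k)=\sum_{h\ge0}Z_n^{\theta,k}(h)$, recognizes the rescaled sum as $\int_0^\infty \tilde{Z}_n^{\theta,k}(t)\,dt$, and uses the elementary lower-semicontinuity fact that $J_1$-convergence $f_n\to f$ gives $\liminf_n\int_0^\infty f_n\,dt\ge\int_0^T f\,dt$ for every $T$, together with $\int_0^\infty z(t)\,dt=c(\infty)=\lambda+\sqrt{\lambda^2+2x}$. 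You instead read $A_n^\eps(k)$ as the first zero of $j\mapsto\csn_n^k(w_n^k(j))$ and evaluate a one-dimensional marginal of Theorem \ref{thm:kconv_gen} at a fixed $s<t^*$; since the limit is deterministic and continuous, that marginal converges in probability to $\phi(s)>0$ and the rest is bookkeeping, exactly as you say. Both arguments work. The paper's route is the more economical one logically: Lemma \ref{thm:genz} sits strictly upstream of Theorem \ref{thm:kconv_gen} (whose omitted proof mimics Steps 1--5 of Theorem \ref{thm:kconv}, and whose ``large $t$'' step already invokes component-size asymptotics), so going through the height profile avoids even the appearance of circularity. Your reduction is not in fact circular --- the theorem's proof needs only an upper bound on $A_n^\eps(k)$ while the proposition asserts a lower bound --- but this deserves a sentence. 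In exchange, your route gives the cleaner probabilistic picture (a level-crossing statement) and correctly isolates the one delicate point, the extension-by-zero convention beyond the exhaustion index. One normalization remark: your inequality $A_n^\eps(k)>\fl{n\eps_n s}$ yields $(n\eps_n)^{-1}A_n^\eps(k)>t^*-\eta$, which is the scaling consistent with Corollary \ref{cor:Cor1} upon taking $\eps_n=n^{-1/3}$; the prefactor $n^{-1/3}\eps_n$ printed in the proposition differs from $(n\eps_n)^{-1}$ by a factor $\theta_n^2=n^{2/3}\eps_n^2\to\infty$, so your ``rescaling'' step proves the intended statement rather than the literal one --- the same discrepancy appears in the final equality of the paper's own proof.
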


We remark that the limiting process $\Z$ found in \eqref{eqn:zsde1} is precisely what one should expect from Aldous' convergence of a rescaled breadth-first walk towards \eqref{eqn:xlamb} found in \cite{A97_1} and the results connecting breadth-first walks on forests and height profiles in \cite{CPU13}. Using the time change $u(t)$ satsifies $\int_0^u\Z(s)\,ds = t$, one can see that the process $Y(t) = \Z(u)$ becomes a Brownian motion with parabolic drift killed upon hitting zero. This is further explained in Lemma \ref{lem:uniquenessLemma}. The connection is a random time-change called the Lamperti transform in the literature on branching processes. This Lamperti transformation has a natural interpretation which relates a breadth-first walk and a corresponding time-change which counts the number of cousin vertices. Moreover, this transformation gives a bijective relationship between a certain class of L\'{e}vy processes and continuous state branching processes. The bijection originated in the work of Lamperti \cite{Lamperti67}, but was proved by Silverstein \cite{Silverstein67}. For a more recent approach see \cite{CLU09}. See \cite{CPU13,CPU17} for generalizations and results involving scaling limits.

To view the connection with the author's previous work in \cite{Clancy19} we include the following corollary of Theorems \ref{thm:kconv} and \ref{thm:kconv_gen}. 
\begin{cor}\label{cor:2} Fix a $\lambda\in \R$. 
	\begin{enumerate}
		\item  Let $k = k(n,x) = \fl{n^{1/3}x}$.	Let $\G_n = G(n, n^{-1}+\lambda n^{-4/3})$, and let $K_n^k$ be the cumulative cousin process defined by \eqref{eqn:kDef}. Then on the Skorohod space
		\begin{equation}\label{eqn:kconv}
		\left(n^{-1} K_n^{k(n,x)}(\fl{n^{2/3}t}); t\ge0\right) \Longrightarrow \left(\int_0^{t\wedge T_{-x}} \left(x+ \X^\lambda(s) \right)\,ds; t \ge 0 \right).
		\end{equation} 
		\item Let $\eps_n$ be a sequence of strictly positive numbers such that $\eps_n\to 0,$ but $\eps_n^3 n\to \infty$. Let $k = k(n,x) = \fl{\eps_n^2 nx}$. Let $\G_n^\eps = G(n,(1+\lambda\eps_n)/n)$ and let $K_n^{\eps,k}$ be the cumulative cousin process defined by \eqref{eqn:kDef} for this sequence of graphs. Then on the Skorohod space $\D(\R_+,\R_+)$ we have
		\begin{equation*}
		\left(\frac{1}{\eps_n^3 n^{2}} K^{\eps,k(n,x)}_n (\fl{n\eps_n t}) ;t\ge 0 \right) \longrightarrow \left( \left(xt + \frac{1}{2}\lambda t^2 - \frac{1}{6}t^3 \right)\vee 0 ;t\ge 0 \right)
		\end{equation*} in probability.
	\end{enumerate}

\end{cor}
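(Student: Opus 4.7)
The plan is to deduce both parts of Corollary \ref{cor:2} from Theorems \ref{thm:kconv} and \ref{thm:kconv_gen} by recognizing the rescaled cumulative cousin process as a Riemann sum of the rescaled cousin process. For part (1), the definition \eqref{eqn:kDef} together with the normalization $n^{-1}$ and mesh $n^{-2/3}$ yields
$$
\frac{1}{n}K_n^{k}(\lfloor n^{2/3}t\rfloor) = n^{-2/3}\sum_{i=0}^{\lfloor n^{2/3}t\rfloor - 1} n^{-1/3}\csn_n^k(w_n^k(i)) = \int_0^{\lfloor n^{2/3}t\rfloor/n^{2/3}} n^{-1/3}\csn_n^k(w_n^k(\lfloor n^{2/3}s\rfloor))\,ds,
$$
and an analogous rewriting for part (2) uses the scalings $n\eps_n$ on the $j$-axis and $\eps_n^2 n$ on the $\csn$-axis.

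I would then apply the continuous mapping theorem with the integration functional $\Phi(f)(t) := \int_0^t f(s)\,ds$. This map is continuous from $\D(\R_+,\R_+)$ into $\C(\R_+,\R_+)$ at any path whose limit is continuous, and both limit processes $s \mapsto x + \X^\lambda(s \wedge T_{-x})$ and $s \mapsto (x + \lambda s - s^2/2) \vee 0$ are continuous. Applying $\Phi$ to the convergences supplied by Theorems \ref{thm:kconv} and \ref{thm:kconv_gen} (the second upgrading automatically to convergence in probability because the limit is deterministic) gives the desired convergence of $\Phi$ applied to the rescaled cousin processes. For part (1), since $x + \X^\lambda(s \wedge T_{-x}) = 0$ for $s \ge T_{-x}$, the limiting integral collapses to $\int_0^{t \wedge T_{-x}} (x + \X^\lambda(s))\,ds$, matching \eqref{eqn:kconv}. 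For part (2), one integrates the truncated parabola and identifies the resulting expression with the claimed cubic, noting that $x + \lambda s - s^2/2$ first hits zero at $\lambda + \sqrt{\lambda^2 + 2x}$ after which the limiting $\csn$ process is identically zero.

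The main technical step is showing that the Riemann-sum discretization error vanishes. The error is bounded by the mesh width times a modulus-of-continuity estimate for the integrand on the relevant compact time interval; both are controlled by the weak convergence, since Skorohod convergence to a continuous limit upgrades to uniform convergence on compacts and so the sup norms of the rescaled $\csn$ processes are tight. A secondary concern is the tail: for part (1), once all components containing $\rho_n(1),\dots,\rho_n(k)$ have been fully explored the statistic $\csn_n^k$ is identically zero on later indices, so $K_n^k(\cdot)$ stops growing at an index that aligns (after rescaling) with the saturation point of the limit; for part (2) the same argument combined with the $Z_n^k$ control underlying Lemma~\ref{thm:zconv} handles the analogous issue. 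Apart from these two routine verifications, the argument is a straightforward invocation of the continuous mapping theorem once the Riemann-sum identification is made.
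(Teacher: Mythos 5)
Your argument is correct, and for part (1) it is exactly what the paper does: the paper disposes of part (1) with the single line ``the rest now follows from integration'' at the end of the proof of Theorem \ref{thm:kconv}, which is precisely your Riemann-sum identification plus the continuity of $f\mapsto\int_0^\cdot f$ at continuous limits. (Your identity $n^{-1}K_n^k(\fl{n^{2/3}t})=\int_0^{n^{-2/3}\fl{n^{2/3}t}}n^{-1/3}\csn_n^k(w_n^k(\fl{n^{2/3}s}))\,ds$ is in fact exact, so the only ``discretization error'' is the endpoint shift of size $n^{-2/3}\sup|\cdot|$; your modulus-of-continuity discussion is more caution than is needed.) For part (2), however, you take a genuinely different route. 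The paper deliberately avoids integrating the $\csn$ limit (whose proof it omits) and instead works with the time-changed process: it observes that $K_n^{\eps,k}\circ C_n^{\theta,k}(h)=\sum_{\ell\le h}(Z_n^{\theta,k}(\ell))^2$, passes to the limit $\int_0^t z(s)^2\,ds$ using Lemma \ref{thm:genz}, and then inverts the Lamperti-type time change $c(t)=\int_0^t z$ to land on $\int_0^{t\wedge t_0}f(s)\,ds$ with $f(s)=x+\lambda s-s^2/2$ and $t_0=\lambda+\sqrt{\lambda^2+2x}$. Your route is shorter and uniform across both parts, at the cost of leaning on the (omitted in the paper) proof of Theorem \ref{thm:kconv_gen}; the paper's route buys a self-contained derivation from the height-profile limit alone and makes the sum-of-squares structure of $K\circ C$ explicit. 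One small remark that applies equally to you and to the paper: $\int_0^{t\wedge t_0}f(s)\,ds$ agrees with $\bigl(xt+\tfrac12\lambda t^2-\tfrac16 t^3\bigr)\vee 0$ only up to the point where the cubic returns to zero; for larger $t$ the integral is the positive constant $\int_0^{t_0}f$, so the $\vee 0$ in the displayed limit should be read as shorthand for stopping the cubic at $t_0$ rather than as a literal truncation.
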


Let us take some time to discuss the connection between Theorem \ref{thm:kconv} and the results in \cite{Clancy19}. The work in \cite{Clancy19} originated in trying to give a random tree interpretation of various results connected to edge limits for eigenvalues of random matrices \cite{GS18, LS19}. In short, those works give a description of the 
random variable
\begin{equation*}
A = \sqrt{12} \left(\int_0^1 r(t)\,dt - \frac{1}{2} \int_0^\infty \left(L_1^v(r)\right)^2\,dv  \right),
\end{equation*} where $r = (r(t);t\in[0,1])$ is a reflected Brownian bridge and $L(r) = \left(L_t^v(r);t\in[0,1],v\ge 0\right)$ is its local time (see Chapter VI of \cite{RY99}). The appearance of the $\sqrt{12}$ term is simply a convenient scaling. In \cite{LS19}, the authors compute some moments of $A$ which led them to ``believe that $A$ admits an interesting combinatorial interpretation." Such an interpretation was given in \cite{Clancy19} involving comparisons of two statistics on random trees and forests, one of which is the number of ``cousin'' vertices $\csn(v)$.

\subsection{Overview of the Paper}

In Section \ref{sec:prelims}, we discuss some preliminaries on random graphs, the Reed-Frost model. In Section \ref{sec:labeling}, we discuss in more detail the ordering of vertices discussed briefly prior to Theorem \ref{thm:kconv}.

In Section \ref{sec:z}, we discuss some lemmas on the asymptotics of the Reed-Frost model. This allows us to go from the scaling limits of the continuous time SIR models in \cite{DL06, Simatos15} to the statement of Lemma \ref{thm:genz}. This section is focused on the nearly critical regime $p(n) = n^{-1}+\lambda n^{-4/3}$. In Section \ref{sec:zthm}, we prove the strong existence and uniqueness of solutions to equation \eqref{eqn:zsde1}. We also discuss the time-change discussed in the introduction.

In Section \ref{sec:klim}, we prove Theorem \ref{thm:kconv} using Lemma \ref{thm:zconv}. In Section \ref{sec:ss}, we prove a self-similarity result for the solution of stochastic differential equations \eqref{eqn:zsde1}. This is analogous to how Aldous \cite{A97_1} described the (time-inhomogeneous) excursion measure of the process $\X^\lambda$ in terms of the It\^{o} excursion measure.

In Section \ref{sec:theta}, we study the more general nearly critical window $(1+\lambda\eps_n)/n$ where $\eps_n\to 0$ and $\eps_n^3 n\to\infty$. In this section we generalize many of the lemmas in Section \ref{sec:z}, in order to prove Theorem \ref{thm:kconv_gen}.

\section{Preliminaries} \label{sec:prelims}

\subsection{Random Graphs}\label{sec:ER}

Recall that the \ER graph $G(n,p)$ is the graph on $n$ elements where each edge is independently included with probability $p$. The fundamental paper of Erd\H{o}s and R\'{e}nyi \cite{ER60} describes the size of the largest component as $n\to \infty$ and $p = \frac{c}{n}$. As briefly discussed in the introduction, a phase transition occurs at $c = 1$. In the subcritical case ($c<1$)  the largest component is of (random) order $\Theta(\log n)$ and in the supercritical case $(c>1)$ the largest component is of order $\Theta(n)$ while in the critical case ($c=1$) the largest two components are of order $\Theta(n^{2/3})$.

Much interest has be paid towards the phase transition which occurs near $c = 1$. Bollob\'{a}s \cite{B84a} showed that if $c = 1+ n^{-1/3}(\log n)^{1/2}$ then the largest component is of order $n^{2/3}(\log n)^{1/2}$. Later {\L}uckzak, Pittel and Wierman \cite{LPW94} showed that in the regime $c = 1+\lambda n^{-1/3}$ for some constant $\lambda$ then any component of the \ER graph has at most $\xi_n$ surplus edges, i.e. each component of size $m$ has at most $m-1+\xi_n$ edges and $\xi_n$ is bounded in probability as $n\to\infty$. Prior to the work in \cite{LPW94}, Bollob\'{a}s studied this regime in \cite{B84b}. See also the monograph \cite{B85}.

We now briefly recall some central results for the asymptotics of $\G_n$. Without going into all of the details, Aldous \cite{A97_1} encapsulates information on the size of the components in terms of a random walk $X_n = \left(X_n(j) ;j = 0,1,\dotsm \right)$. Namely, setting $T_n(\ell) = \inf\{j: X_n(j) = -\ell \}$, the sizes of the components of $\G_n$ are recovered by $(T_n(\ell+1) -T_n(\ell))$. Using this relationship along, with the scaling limit
\begin{equation*}
\left(n^{-1/3}X_n(\fl{n^{2/3}t});t\ge0 \right) \Longrightarrow \left( \X^\lambda(t);t\ge 0\right), 
\end{equation*} Aldous \cite{A97_1} is able to relate asymptotics of both the number of surplus edges and the size of the components to the excursions above past minima of the process $\X^\lambda$ defined in \eqref{eqn:xlamb}. 

Just as there is a theory of continuum limits of random trees (see, e.g. Aldous's work \cite{A90,A91,A93} and the monograph \cite{LD02}) there is a continuum limit of the largest components of $\G_n$. The scaling limit for the largest components was originally described by Addario-Berry, Broutin and Goldschmidt in \cite{ABG12} and additional results about continuum limit object can be found in their companion paper \cite{ABG10}. Their results have been generalized in several aspects. Within a Brownian setting, Bhamidi et. al. \cite{BSW17} provided scaling limits of measured metric spaces for a large class of inhomogeneous graph models. Continuum limits related to excursions of ``L\'{e}vy processes without replacement"  are described in \cite{BHS18}. In an $\alpha$-stable setting, continuum limits are described in the work of  Conchon-Kerjan and Goldschmidt \cite{CKG20}, where the limiting objects are related to \textit{tilting} excursions of a spectrally positive $\alpha$-stable process and their corresponding height processes (cf. \cite{LL98a,LL98b}). See also, \cite{GHS18} for more information about the continuum limits in the $\alpha$-stable setting.

\subsection{Epidemic Models}\label{sec:RFER}

The Reed-Frost model of epidemics describes the spread of a disease in a population of $n$ individuals in discrete time. It is described in terms of two processes $I = (I(t); t = 0,1,\dotsm)$ and $S = (S(t); t =0,1,\dotsm)$ where $I$ represents the number of infected individuals and $S$ represents the number of susceptible individuals. At each time $t$, every infected individuals as a probability $p$ of coming in contact with a susceptible individual and infecting that individual. 

It is further assumed that each infected individual at time $t$ recovers at time $t+1$. While $I$ itself is not Markov since the number of people who can be infected at time $t+1$ depend on the total number of infected individuals by time $t$, the pair $(S,I)$ is. Moreover, it can be easily seen that 
\begin{equation*}
\left(I(t+1) \big| I(t) = i,  S(t) =s \right) \overset{d}{=} \text{Bin}\left(s, 1-(1-p)^i \right), 
\end{equation*} and $S(t+1)$ is obtained by setting $S(t)-I(t+1)$. 

As described in \cite{BM90}, the Reed-Frost model can be described as exploring an \ER graph $G(n,p)$. We quote them at length:
\begin{quote}
	[O]ne or more initial vertices [of $G(n,p)$] are chosen at random as the $I(0)$ initial infectives, their neighbors become the $I(1)$ infectives at time $1$, and, inductively the $I(t+1)$ infectives at time $t+1$ are taken to be those neighbours of the $I(t)$ infectives at time $t$ which have not previously been infected.
\end{quote}

In \cite{vBML80}, von Bahr and Martin-L\"{o}f give a back-of-the-envelope calculation to show that the Reed-Frost epidemic should have a scaling limit for the process $I$ when $p = (n^{-1} + \lambda n^{-4/3}+o(n^{-4/3}))$ and suitable Lindeberg conditions hold. In Lemma \ref{lem:asympt}, we provide detailed results on the asymptotics in this regime, and generalize this with Lemma \ref{lem:asympt2} to the regime where $p = n^{-1}+\lambda \theta_n n^{-4/3}$ where $\theta_n\to \infty$, but $\theta_n = o(n^{1/3})$. 
\subsection{The breadth-first labeling} \label{sec:labeling}

The breadth-first labeling we will use can be described on any graph, so we will describe it on a generic finite graph $G$ with $n$ vertices. See Figure \ref{fig:comp} below as well. The breadth-first ordering in this work is described, in short, as follows
\begin{itemize}
	\item Randomly select $k$ distinct vertices in a graph $G$. Call these vertices the roots and label these by $\rho(j)$ for $j = 1,2\dotsm$.
	\item Begin the labeling of these roots by $w^k(0) = \rho(1)$, $w^k(1) = \rho(2),\dotsm w^k(k-1) = \rho(k)$,
	\item Label the unlabeled vertices neighboring vertex $w^k(0)$ by $w^k(k),w^k(k+1),\dotsm,w^k(\ell-1)$. 
	\item After exploring the neighbors of vertex $w^k(j-1)$ and using the labels $w^k(0),\dotsm, w^k(m-1)$ (say), label the unlabeled vertices neighboring vertex $w^k(j)$ by $w^k(m),w^k(m+1), \dotsm$. Continue this until all labeled vertices have been explored (which occurs when you explore every vertex connected to $\{\rho(1),\rho(2),\dotsm, \rho(k)\}$).
	
\end{itemize}
In more detail we label all the roots as in the second bullet point above as $w^k(j)$ for $j = 0,1,\dotsm, k-1$, and define the vertex set $\mathscr{V}^k(1) = \{w^k(0),\dotsm, w^k(k-1)\}.$ Now given a vertex set $\mathscr{V}^k(i) = \{w^k(j),w^k(j+1),\dotsm, w^k(\ell)\}$ we define the vertices $w^k(\ell+1), w^k(\ell+2),\dotsm,w^k(\ell+c)$ as the unlabeled vertices which are a neighbor of $w^k(j)$ (if any), where $c$ represents the total number of such vertices. Then define the vertex set $$\mathscr{V}^k(i+1) = \mathscr{V}^k(i)\setminus\{ w^k(j) \}\cup\{w^k(\ell+1),\dotsm,w^k(\ell+c) \}.$$

When we have a sequence of graphs $(G_n; n = 1,2,\dotsm)$, we include a subscript $n$ for both the roots and the breadth-first labeling. That is we write $\rho_n(j)$ and $w_n^k(j)$. 

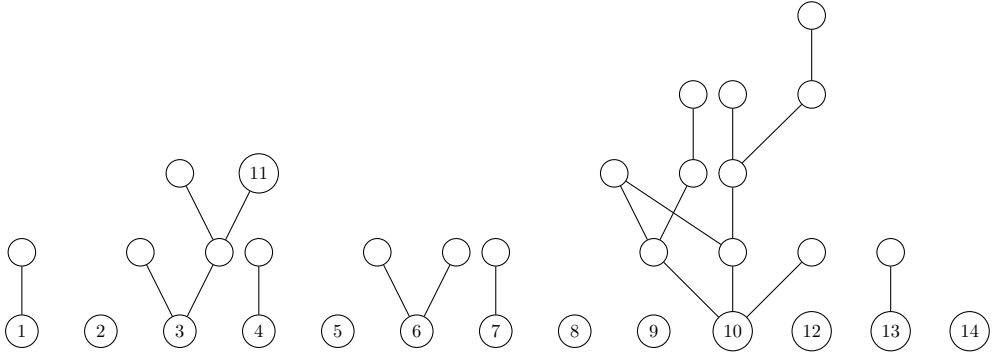
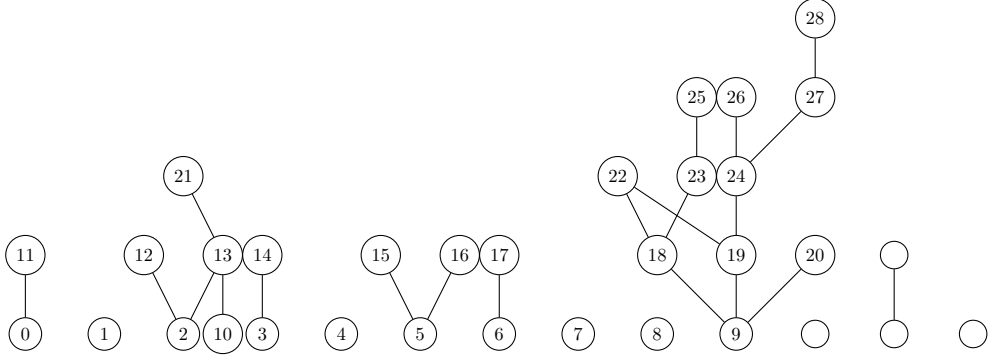
\begin{figure}[h!] 
	\begin{subfigure}[t]{\textwidth}
		\begin{tikzpicture}[scale=.7]
		
		\node (parent) {} [grow'=up]
		child {
			node [circle,scale=.7,draw] (a) {1} edge from parent[draw=none]
			child {
				node [circle,scale=.7,draw] (b) {\phantom{-}}
			}
		}
		child {
			node [circle,scale=.7,draw] (c) {2} edge from parent[draw=none]
		}
		child {
			node [circle,scale=.7, draw] (d) {3} edge from parent[draw=none]
			child{
				node [circle,scale=.7,draw] (e) {\phantom{-}}
			}
			child{
				node [circle,scale=.7,draw] (f) {\phantom{-}}
				child{
					node [circle,scale=.7,draw] (g) {\phantom{-}}
				}
				child{
					node [circle,scale=.7,draw] (h) {11}
				}
			}
		}
		child {
			node [circle,scale=.7,draw] (i) {4} edge from parent[draw=none]
			child{
				node [circle,scale=.7,draw] (j) {\phantom{-}}
			}
		}
		child {
			node [circle,scale=.7,draw] (k) {5} edge from parent[draw=none]
		}
		child{
			node [circle,scale=.7,draw] (l) {6} edge from parent[draw=none]
			child{
				node [circle,scale=.7,draw] (m) {\phantom{-}}
			}
			child{
				node [circle,scale=.7,draw] (n) {\phantom{-}}
			}
		}
		child{
			node [circle,scale=.7,draw] (o) {7} edge from parent[draw=none]
			child{
				node [circle,scale=.7,draw] (p) {\phantom{-}}
			}
		}
		child{
			node[circle,scale=.7,draw] (q) {8} edge from parent[draw=none]
		}
		child {
			node[circle,scale=.7,draw] (r) {9} edge from parent[draw=none]
		}
		child {
			node[circle,scale=.7,draw] (s) {10} edge from parent[draw=none]
			child {
				node[circle,scale=.7,draw] (t) {\phantom{-}}
				child {
					node[circle,scale=.7,draw] (u) {\phantom{-}} 
				} 
				child {
					node[circle,scale=.7,draw] (v) {\phantom{-}}
					child {
						node[circle,scale=.7,draw] (w) {\phantom{-}} 
					} 
				}
			}
			child {
				node[circle,scale=.7,draw] (x) {\phantom{-}}
				child {
					node[circle,scale=.7,draw] (y) {\phantom{-}} 
					child {
						node (blank2) {} edge from parent[draw=none]
					}
					child {
						node[circle,scale=.7,draw] (z) {\phantom{-}} 
					}
					child {
						node[circle,scale=.7,draw] (aa) {\phantom{-}} 
						child {
							node[circle,scale=.7,draw] (ab) {\phantom{-}} 
						}
					}
				} 
			}
			child {
				node[circle,scale=.7,draw] (ac) {\phantom{-}} 
			}	
		}
		child {
			node[circle,scale=.7,draw] (ad) {12} edge from parent [draw=none]
		}
		child {
			node[circle,scale=.7,draw] (ae) {13} edge from parent [draw=none]
			child {
				node[circle,scale=.7,draw] (af) {\phantom{-}} 
			} 
		}
		child {
			node[circle,scale=.7,draw] (ag) {14} edge from parent [draw=none]
		}
		;
		\draw[-] (x) -- (u);
		\end{tikzpicture}	
		\subcaption{The example graph used in Aldous \cite[Fig. 1]{A97_1}. The labeled vertices are $\rho_n(1),\dotsm, \rho_n(14)$.}
	\end{subfigure}
	\newline

	\begin{subfigure}[b]{\textwidth}
		\begin{tikzpicture}[scale=.7]
		\node (parent) {} [grow'=up]
		child {
			node [circle,scale=.7,draw] (a) {0} edge from parent[draw=none]
			child {
				node [circle,scale=.7,draw] (b) {11}
			}
		}
		child {
			node [circle,scale=.7,draw] (c) {1} edge from parent[draw=none]
		}
		child {
			node [circle,scale=.7, draw] (d) {2} edge from parent[draw=none]
			child{
				node [circle,scale=.7,draw] (e) {12}
			}
			child{
				node [circle,scale=.7,draw] (f) {13}
				child{
					node [circle,scale=.7,draw] (g) {21}
				}
				child [grow=down]{
					node [circle,scale=.7,draw] (h) {10}
				}
			}
		}
		child {
			node [circle,scale=.7,draw] (i) {3} edge from parent[draw=none]
			child{
				node [circle,scale=.7,draw] (j) {14}
			}
		}
		child {
			node [circle,scale=.7,draw] (k) {4} edge from parent[draw=none]
		}
		child{
			node [circle,scale=.7,draw] (l) {5} edge from parent[draw=none]
			child{
				node [circle,scale=.7,draw] (m) {15}
			}
			child{
				node [circle,scale=.7,draw] (n) {16}
			}
		}
		child{
			node [circle,scale=.7,draw] (o) {6} edge from parent[draw=none]
			child{
				node [circle,scale=.7,draw] (p) {17}
			}
		}
		child{
			node[circle,scale=.7,draw] (q) {7} edge from parent[draw=none]
		}
		child {
			node[circle,scale=.7,draw] (r) {8} edge from parent[draw=none]
		}
		child {
			node[circle,scale=.7,draw] (s) {9} edge from parent[draw=none]
			child {
				node[circle,scale=.7,draw] (t) {18}
				child {
					node[circle,scale=.7,draw] (u) {22} 
				} 
				child {
					node[circle,scale=.7,draw] (v) {23}
					child {
						node[circle,scale=.7,draw] (w) {25} 
					} 
				}
			}
			child {
				node[circle,scale=.7,draw] (x) {19}
				child {
					node[circle,scale=.7,draw] (y) {24} 
					child {
						node (blank2) {} edge from parent[draw=none]
					}
					child {
						node[circle,scale=.7,draw] (z) {26} 
					}
					child {
						node[circle,scale=.7,draw] (aa) {27} 
						child {
							node[circle,scale=.7,draw] (ab) {28} 
						}
					}
				} 
			}
			child {
				node[circle,scale=.7,draw] (ac) {20} 
			}	
		}
		child {
			node[circle,scale=.7,draw] (ad) {\phantom{-}} edge from parent [draw=none]
		}
		child {
			node[circle,scale=.7,draw] (ae) {\phantom{-}} edge from parent [draw=none]
			child {
				node[circle,scale=.7,draw] (af) {\phantom{-}} 
			} 
		}
		child {
			node[circle,scale=.7,draw] (ag) {\phantom{-}} edge from parent [draw=none]
		}
		;
		\draw[-] (x) -- (u);
		\end{tikzpicture}
		\subcaption{The breadth-first labelling of the example graph in \cite{A97_1} where $\rho_n(j) = j$ for $j=1,2,\dotsm,11$. In lieu of writing $w_n^{11}(j)$ in each vertex, we just write $j$ instead. }	
	\end{subfigure}
	\caption{Exploration of the graph.}	\label{fig:comp}
\end{figure}

\section{Limit of Height Profile}\label{sec:z}

In this section we provide lemmas necessary to go from the convergence in \cite{DL06, Simatos15} of a continuous-time epidemic model to the statement presented in Lemma \ref{thm:zconv}. This is hinted at in \cite[Appendix 2]{vBML80} as well. We fix a $\lambda\in \R$ and let $\G_n = G(n,n^{-1}+\lambda n^{-4/3})$ denote an \ER random graph, and let $Z_n^k$ be defined by \eqref{eqn:zDiscrete}. For convenience, we let $C_n^k = \left(C_n^k(h); h = 0,1,\dotsm \right)$ be defined by 
\begin{equation}\label{eqn:cDiscDef}
C_n^k(h) = \sum_{j=0}^h Z_n^k(j).
\end{equation} In terms of the graph $\G_n$, $C_n^k(h)$ represents the number of vertices within distance $h$ of the $k$ randomly selected vertices $\{\rho_n(1),\dotsm, \rho_n(k)\}$. In terms of the SIR model, $C_n^k(h)$ represents the number of individuals who have contracted the disease at or before ``time" $h$.

From the correspondence of the Reed-Frost model and the \ER random graph, we know that $(Z_n^k(h),C_n^k(h))$ is a Markov chain with state space
\begin{equation*}
\mathcal{S} = \{(z,c)\in \mathbb{Z}^2: z,c\ge 0\}
\end{equation*} which is absorbed upon hitting the line $\{(0,c): c \ge 0\}$. Moreover, the conditional distribution of $Z_n^k(h+1)$ given $\left(Z_n^k(h),C_n^k(h)\right)$ is
\begin{equation}\label{eqn:condDist}
\left(Z_n^k(h+1) \bigg| Z_n^k(h) = z, C_n^k(h) = c\right) \overset{d}{=} \left\{ 
\begin{array}{ll}
\text{Bin}(n-c, q(n,z))&:z>0, c<n\\
0&: \text{else}
\end{array}
\right.,
\end{equation} where $q(n,z)$ is defined by
\begin{equation}\label{eqn:qDef}
q(n,z) = 1- \left(1-n^{-1} - \lambda n^{-4/3} \right)^z.
\end{equation} The joint conditional distribution of $(Z_n^k(h+1),C_n^k(h+1))$ is easily deduced from equations \eqref{eqn:condDist} and \eqref{eqn:cDiscDef}.

\subsection{Asymptotics for binomial statistics}

We begin by examining the binomial random variables
$$
\beta(n,z,c) \overset{d}{\sim} \text{Bin}(n-c,q(n,z)),
$$
where $q(n,z)$ is defined by \eqref{eqn:qDef}. Examining the convergence in Theorem \ref{thm:zconv}, we'll study various statistics of $\beta(n,z,c)$ as $n\to\infty$ with $z = O(n^{1/3})$ and $c = O(n^{2/3})$. 

We define the following statistics
\begin{equation}\label{eqn:betaStats}
\mu(n,z,c) = \E\left[\beta(n,z,c) \right],\quad \sigma^2(n,z,c) = \Var\left[\beta(n,z,c) \right],
\quad \kappa(n,z,c) = \E\left[(\beta(n,z,c) - z)^4 \right].
\end{equation}

The main purpose of this subsection is to establish the following lemma:
\begin{lem}\label{lem:asympt}
Fix an $r>0$ and $T>0$ and let
\begin{equation*}
\Omega_n = \Omega (n,r,T) = \left\{(z,c)\in \mathbb{Z}^2: 0\le z\le n^{1/3}r, 0\le c\le n^{2/3} Tr \right\}.
\end{equation*}
Then, as $n\to\infty$, the following bounds hold:
\begin{equation}\label{eqn:asypmtotics}
\begin{split}
&\sup_{\Omega_n} \left|\mu(n,z,c) - z - n^{-1/3} z(\lambda - n^{-2/3}c) \right| = O(n^{-1/3})\\
&\sup_{\Omega_n} \left| \sigma^2(n,z,c) - z - n^{-1/3} z(\lambda - n^{-2/3} c) \right| = O(n^{-1/3})\\
&\sup_{\Omega_n} \left|\kappa(n,z,c)\right| = O(n^{2/3})
\end{split}
\end{equation} In particular, \begin{equation*}
\begin{split}
&\sup_{\Omega_n} \left| \mu(n,z,c) - z\right| = O(1)\\
&\sup_{\Omega_n} \left| \sigma^2(n,z,c) - z\right| = O(1)
\end{split}, \qquad \text{as }n\to\infty.
\end{equation*}
\end{lem}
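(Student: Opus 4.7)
My plan is to expand $q(n,z) = 1 - (1-p_n)^z$ to a sufficiently high order in $p_n = n^{-1} + \lambda n^{-4/3}$ and then substitute into the closed-form expressions for the moments of the binomial $\mathrm{Bin}(n-c, q(n,z))$. On $\Omega_n$ one has $p_n = O(n^{-1})$ and $z \le n^{1/3} r$, so $z p_n = O(n^{-2/3})$ is small. The binomial theorem gives
\begin{equation*}
q(n,z) = z p_n - \binom{z}{2} p_n^2 + O(z^3 p_n^3),
\end{equation*}
where the final error is $O(n^{-2})$ uniformly on $\Omega_n$. Substituting $p_n = n^{-1} + \lambda n^{-4/3}$ and collecting terms yields
\begin{equation*}
q(n,z) = \frac{z}{n} + \frac{\lambda z}{n^{4/3}} - \frac{z^2}{2 n^2} + O(n^{-5/3}),
\end{equation*}
uniformly in $(z,c) \in \Omega_n$.

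The first two estimates in \eqref{eqn:asypmtotics} follow essentially at once from this expansion. For the mean, I would multiply by $n - c$ and use $c = O(n^{2/3})$ to obtain
\begin{equation*}
\mu(n,z,c) = z + \lambda z n^{-1/3} - \frac{z c}{n} - \frac{z^2}{2 n} + O(n^{-2/3}).
\end{equation*}
The first three terms on the right are exactly $z + n^{-1/3} z(\lambda - n^{-2/3} c)$, and the remaining $-z^2/(2n)$ is $O(n^{-1/3})$ since $z \le n^{1/3} r$. For the variance, I would write $\sigma^2(n,z,c) = \mu(n,z,c)(1 - q(n,z))$ and observe that $\mu = O(n^{1/3})$ and $q = O(n^{-2/3})$, so $\sigma^2 - \mu = -\mu q = O(n^{-1/3})$; the variance bound then reduces to the mean bound.

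The main obstacle will be the fourth-moment estimate, since $\kappa(n,z,c)$ is centered at $z$ rather than at $\mu$. To handle this I would write $(\beta - z)^4 = ((\beta - \mu) + (\mu - z))^4$ and take expectations, giving
\begin{equation*}
\kappa(n,z,c) = \mu_4 + 6 \sigma^2 (\mu - z)^2 + 4 \mu_3 (\mu - z) + (\mu - z)^4,
\end{equation*}
where $\mu_3, \mu_4$ denote the third and fourth central moments of the binomial $\mathrm{Bin}(n-c, q(n,z))$. The previous paragraph already gives $\mu - z = O(1)$ uniformly on $\Omega_n$, using that $z \lambda n^{-1/3} = O(1)$ and $zc/n = O(1)$ there. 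Plugging in the classical formulas $\mu_3 = \sigma^2 (1 - 2 q)$ and $\mu_4 = 3 \sigma^4 + \sigma^2(1 - 6 q (1 - q))$ and using $\sigma^2 = O(n^{1/3})$ together with $q = O(n^{-2/3})$, each of the four terms on the right-hand side is $O(n^{2/3})$, with the dominant contribution coming from the $3 \sigma^4$ piece of $\mu_4$. The ``in particular'' statement $\mu - z, \sigma^2 - z = O(1)$ is then immediate from the two displayed $O(n^{-1/3})$ bounds together with the observation that $n^{-1/3} z(\lambda - n^{-2/3} c) = O(1)$ on $\Omega_n$.
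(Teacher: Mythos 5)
Your proposal is correct and follows essentially the same route as the paper: expand $q(n,z)$ via the binomial theorem, obtain the mean from $(n-c)q(n,z)$, the variance from $\sigma^2 = \mu(1-q)$ with $\mu = O(n^{1/3})$ and $q = O(n^{-2/3})$, and the fourth moment from the decomposition $(\beta - z)^4 = ((\beta-\mu)+(\mu-z))^4$ combined with the classical binomial central-moment formulas, the dominant contribution being $3\sigma^4 = O(n^{2/3})$. One minor imprecision: in your displayed expansion of $\mu(n,z,c)$ the omitted cross term $-\lambda z c\, n^{-4/3}$ is of size $O(n^{-1/3})$ on $\Omega_n$, not $O(n^{-2/3})$, but since the target bound is $O(n^{-1/3})$ this does not affect the conclusion.
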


\begin{proof}
We prove the statements in equation \eqref{eqn:asypmtotics}, since the latter bounds easily follow from the more detailed asymptotics.

We start with the expansion of $\mu(n,z,c)$. The binomial theorem gives
\begin{align*}
\mu(n,z,c) &= (n-c) \left(1-(1-n^{-1}-\lambda n^{-4/3})^z \right)\\
&= (n-c) \left(z(n^{-1}+\lambda n^{-4/3}) - \sum_{j=2}^z \binom{z}{j} (-1)^j (n^{-1} + \lambda n^{-4/3})^j \right)\\
&= z + n^{-1/3} z(\lambda -n^{-2/3}c) - \lambda n^{-4/3} z c - (n-c)\sum_{j=2}^z \binom{z}{j} (-1)^j (n^{-1}+\lambda n^{-4/3})^j.
\end{align*}
For $n$ sufficiently large, we can obtain the bounds
\begin{align*}
\left|\mu(n,z,c) - z - n^{-1/3}z(\lambda - n^{-2/3}c)  \right|&\le |\lambda| n^{-4/3} z c + (n-c)\left|\sum_{j=2}^z \binom{z}{j} (-1)^j (n^{-1}+\lambda n^{-4/3})^j\right|\\
 &\le |\lambda| n^{-4/3} zc + n\sum_{j=2}^z \binom{z}{j} (n^{-1}+\lambda n^{-4/3})^j\\
&\le |\lambda| n^{-4/3}zc + n\sum_{j=2}^z \left(\frac{2ez}{n}\right)^j.
\end{align*} In the second and third inequality above, we used the bound $0<n^{-1}+\lambda n^{-4/3} \le 2n^{-1}$ and the bound $\displaystyle \binom{m}{k} \le (em)^k.$

Taking the supremum over $\Omega_n$, gives
\begin{align*}
\sup_{\Omega_n} \left|\mu(n,z,c) - z - n^{-1/3}z(\lambda - n^{-2/3}) \right| &\le n^{-1/3} |\lambda| Tr^2 + n\sum_{j=2}^{n^{1/3} r} \left( \frac{2er}{n^{2/3}}\right)^j\\
&\le n^{-1/3} |\lambda|Tr^2 + n\left(\frac{4e^2r^2 n^{-4/3}}{1-2ern^{-2/3}}\right)\\
&\le \left(|\lambda|Tr^2 + 8e^2r^2 \right)n^{-1/3} = O(n^{-1/3}).
\end{align*}
This proves the desired expansion and bound for $\mu(n,z,c)$.

We now examine the bounds for $\sigma^2(n,z,c)$. Again, we use the binomial theorem
\begin{align}
\sigma^2(n,z,c) &= \mu(n,z,c) (1-n^{-1} - \lambda n^{-4/3})^z \nonumber\\
&= \mu(n,z,c) \left(1-z \left(n^{-1}+\lambda n^{-4/3}\right) + \sum_{j=2}^z \binom{z}{j} (-1)^j(n^{-1}+\lambda n^{-4/3})^j  \right)\nonumber\\
&= \mu(n,z,c) - \mu(n,z,c)z(n^{-1}+ \lambda n^{-4/3}) + \mu(n,z,c)\sum_{j=2}^z \binom{z}{j} (-1)^j (n^{-1}+\lambda n^{-4/3})^j. \label{eqn:sigmaBound}
\end{align} We can then use the previous asymptotic bounds for $\mu(n,z,c)$ to get
\begin{equation*}
\sup_{\Omega_n} \left|\sigma^2(n,z,c) - z - n^{-1/3}z(\lambda -n^{-2/3}) \right| \le \sup_{\Omega_n} |\sigma^2(n,z,c) - \mu(n,z,c)|+O(n^{-1/3}).
\end{equation*} We can bound the first term on the right-hand side as we did for $\mu(n,z,c)$ above. We get
\begin{align*}
\sup_{\Omega_n}& \left|\sigma^2(n,z,c) - \mu(n,z,c) \right| \le \sup_{\Omega_n} \left|\mu(n,z,c)z (n^{-1}+\lambda n^{-4/3}) + n\sum_{j=2}^z \binom{z}{j} (n^{-1}+\lambda n^{-4/3})^j \right| \\
&\le \sup_{\Omega_n} \left(2rn^{-2/3} \mu(n,z,c) + n \sum_{j=2}^z \binom{z}{j} (n^{-1} +\lambda n^{-4/3})^j \right)\\
&\le 2rn^{-2/3} \sup_{\Omega_n}\left(|\mu(n,z,c) - z - n^{-1/3}z(\lambda - n^{-2/3}c)| + |z+n^{-1/3}z(\lambda- n^{-2/3}c)| \right)\\
&\qquad + O(n^{-1/3})\\
&= 2rn^{-2/3} \left(O(n^{-1/3})+ O(n^{1/3}) \right) + O(n^{-1/3})\\
&= O(n^{-1/3}).
\end{align*} In the first line we used the bound $0<n^{-1}+\lambda n^{-4/3} \le 2n^{-1}$ for large enough $n$, and $\mu(n,z,c)\le n$, the second inequality used the bounds $(n^{-1} + \lambda n^{-4/3})z\le 2rn^{-2/3}$ on $\Omega_n$. The third inequality used the previously derived bound of $\displaystyle \sum_{j=2}^z \binom{z}{j}(n^{-1}+\lambda n^{-4/3}) \le 8e^2 r^2 n^{-4/3}$ which holds for $n$ sufficiently large.

To show the bound for $\kappa(n,z,c)$, we expand it as follows
\begin{align*}
\kappa(n,z,c) &= \E\left[(\beta(n,z,c)-\mu(n,z,c))^4 \right] + 4 \E\left[(\beta(n,z,c)-\mu(n,z,c))^3\right](\mu(n,z,c)-z) \\
&\qquad + 6 \E\left[ (\beta(n,z,c)-\mu(n,z,c))^2 \right](\mu(n,z,c)-z)^2  \\
&\qquad + 4\E\left[\beta(n,z,c)-\mu(n,z,c)\right](\mu(n,z,c)-z)^3\\
&\qquad +(\mu(n,z,c)-z)^4\\
&=: \kappa_4(n,z,c) + 4\kappa_3(n,z,c) + 6 \kappa_2(n,z,c) +0 + \kappa_0(n,z,c).
\end{align*}
We now show that $\kappa_j(n,z,c)$ for $j = 0,2,3,4$ have the desired bound.

By the approximations for $\mu(n,z,c)$ it is easy to see that $$
\sup_{\Omega_n}|\kappa_0(n,z,c)| = O(1),\qquad \text{as }n\to\infty.
$$ Similarly, we can use the approximations for both $\mu(n,z,c)$ and $\sigma^2(n,z,c)$ to arrive at
\begin{align*}
\sup_{\Omega_n}| \kappa_2(n,z,c) |&= \sup_{\Omega_n} \left|\sigma^2(n,z,c)(\mu(n,z,c) - z)^2\right|\\
&\le O(1) \cdot \sup_{\Omega_n} \left(\left| \sigma^2(n,z,c)-z- n^{-1/3}z(\lambda -n^{-2/3}c)\right|+ |z+n^{-1/3}z(\lambda - n^{-2/3}c)|\right)\\
&= O(1) \cdot \left(O(n^{-1/3}) + O(n^{1/3})\right) = O(n^{1/3}).
\end{align*}

Using the third central moment of a binomial random variable gives
\begin{align*}
\kappa_3(n,z,c) = \sigma^2(n,z,c)(1-2q(n,z)) (\mu(n,z,c)-z).
\end{align*} A similar expansion as that for $\kappa_2(n,z,c)$ shows that $\sup_{\Omega_n}|\sigma^2(n,z,c)| = O(n^{1/3})$, and the other two terms are $O(1)$ over $\Omega_n$ and hence
\begin{equation*}
\sup_{\Omega_n} |\kappa_3(n,z,c)| = O(n^{1/3}).
\end{equation*}

By the fourth central moment for a binomial random variable, we have
\begin{align*}
|\kappa_4(n,z,c)| &= \sigma^2(n,z,c)\left|1 + 3(n-2-c)(q(n,z)-q(n,z)^2)\right|\\
&\le \sigma^2(n,z,c) \left(\left|1 + 3(n-c)(q(n,z)-q(n,z)^2) \right|+2|(q(n,z)-q(n,z)^2) | \right)\\
&\le \sigma^2(n,z,c) \left(3 + 3\sigma^2(n,z,c) \right).
\end{align*} Hence, by the bound for $\sigma^2(n,z,c)$
\begin{equation*}
\sup_{\Omega_n} |\kappa_4(n,z,c)| = O(n^{2/3}).
\end{equation*} This proves the desired claim.

\end{proof}

\subsection{Martingale estimates}

In this section we verify the conditions of the martingale functional central limit theorem, as found in \cite[Theorem 7.4.1]{EK86}. Before moving onto the lemma, we establish some notation.

We let 
\begin{equation*}
\F_n^k(h) = \sigma\left( Z_n^k(j) : j\le h\right)
\end{equation*} denote the filtration generated by $Z_n^k$. We let 
\begin{equation*}
Z_n^k(h) = k + M_n^k(h) + B_n^k(h),
\end{equation*}
be the Doob decomposition of $Z_n^k$ into an $\{\F_n^k(h)\}_{h\ge 0}$-martingale $M_n^k$ and a predictable process $B_n^k$. Similarly, we let $Q_n^k$ be the unique increasing process which makes $(M_n^k(h))^2-Q_n^k(h)$ an $\{\F_n^k(h)\}_{h\ge 0}$-martingale. That is
\begin{equation*}
\begin{split}
B_n^k(h) &= \sum_{\ell = 0}^{h-1} \E\left[ Z_n^k(\ell+1) - Z_n^k(\ell) | \F_n^k(\ell)\right]\\
Q_n^k(h) &= \sum_{\ell=0}^{h-1} \E\left[\left(Z_n^k(\ell+1)-Z_n^k(\ell) \right)^2\big| \F_n^k(\ell) \right] - \E\left[Z_n^k(\ell+1)-Z_n^k(\ell) | \F_n^k(\ell) \right]^2.
\end{split}
\end{equation*}

We define the following rescaled processes
\begin{equation}\label{eqn:rescales} 
\begin{split}
\tilde{Z}_n^k(t) &= n^{-1/3}Z_n^k(\fl{n^{1/3}t})\qquad \tilde{C}_n^k(t) = n^{-2/3}C_n^k(\fl{n^{1/3}t})\qquad \\
\tilde{M}_n^k(t) &= n^{-1/3}M^k_n(\fl{n^{1/3}t})\qquad \tilde{B}_n^k(t) = n^{-1/3}B^k_n(\fl{n^{1/3}t}) \qquad \tilde{Q}_n^k(t)= n^{-2/3}Q^k_n(\fl{n^{1/3}t})
\end{split}.
\end{equation} Also define $\tau_n^k(r) = \inf\{t:\tilde{Z}_n^k(t)\vee \tilde{Z}_n^k(t-)\ge r\}$ and $\hat{\tau}_n^k(r) = n^{-1/3}\inf\{h: Z_n^k(h)\ge n^{1/3}r\}$.

\begin{lem} \label{lem:ekVerify} Fix any $r>0$, $T>0$ and $x>0$. Let $k = k(n) = \fl{n^{1/3}x}$. 
	The following limits hold
	\begin{enumerate}
		\item $\displaystyle \lim_{n\to\infty}\E\left[ \sup_{t\le T\wedge \tau_n^k(r)} |\tilde{Z}_n^k(t)-\tilde{Z}_n^k(t-)|^2\right]  = 0$.
		\item $\displaystyle \lim_{n\to\infty}\E\left[ \sup_{t\le T\wedge \tau_n^k(r)} |\tilde{B}_n^k(t)-\tilde{B}_n^k(t-)|^2\right]  = 0$.
		\item $\displaystyle \lim_{n\to\infty}\E\left[ \sup_{t\le T\wedge \tau_n^k(r)} |\tilde{Q}_n^k(t)-\tilde{Q}_n^k(t-)| \right] = 0$.
		\item $\displaystyle \sup_{t\le T\wedge \tau_n^k(r)} \left|\tilde{Q}_n^k(t)  - \int_0^t \tilde{Z}^k_n(s)\,ds \right| \longrightarrow 0$, in probability as $n\to\infty$.
		\item $\displaystyle  \sup_{t\le T\wedge \tau_n^k(r)}\left| \tilde{B}_n^k(t)- \int_0^t (\lambda -\tilde{C}^k_n(s))\tilde{Z}^k_n(s) \,ds\right|{\longrightarrow}0$, in probability as $n\to\infty$.
	\end{enumerate}
\end{lem}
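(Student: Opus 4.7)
The plan is to reduce all five statements to the uniform estimates of Lemma~\ref{lem:asympt}. From \eqref{eqn:condDist} and the definitions in \eqref{eqn:betaStats}, the compensator and quadratic variation are
\begin{equation*}
B_n^k(h) = \sum_{\ell=0}^{h-1}\bigl(\mu(n,Z_n^k(\ell),C_n^k(\ell)) - Z_n^k(\ell)\bigr),\qquad Q_n^k(h) = \sum_{\ell=0}^{h-1}\sigma^2(n,Z_n^k(\ell),C_n^k(\ell)),
\end{equation*}
while the conditional fourth moment of an increment is $\kappa(n,Z_n^k(\ell),C_n^k(\ell))$. The crucial observation is that for every index $\ell < n^{1/3}(T\wedge \tau_n^k(r))$ one has $Z_n^k(\ell)\le n^{1/3}r$ by definition of $\tau_n^k(r)$, whence $C_n^k(\ell)\le \ell\cdot n^{1/3}r\le n^{2/3}Tr$, so the pair $(Z_n^k(\ell),C_n^k(\ell))$ lies in $\Omega_n(r,T)$ and every bound from Lemma~\ref{lem:asympt} is available.

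Items (2) and (3) then follow from pointwise bounds on the jumps: Lemma~\ref{lem:asympt} yields $|\mu(n,z,c)-z|=O(1)$ and $\sigma^2(n,z,c)=O(n^{1/3})$ uniformly on $\Omega_n$, so the stopped jumps of $\tilde B_n^k$ and $\tilde Q_n^k$ are each $O(n^{-1/3})$, delivering $L^1$ convergence to $0$ (squared, in (2)). For (1) a pointwise bound on $|\Delta \tilde Z_n^k|$ is unavailable, so I would use the fourth-moment bound: the tower property together with $\kappa=O(n^{2/3})$ on $\Omega_n$ give
\begin{equation*}
\E\Bigl[\sup_{t\le T\wedge \tau_n^k(r)} (\tilde Z_n^k(t) - \tilde Z_n^k(t-))^4\Bigr] \le n^{-4/3}\sum_{\ell=0}^{\fl{n^{1/3}T}}\E\bigl[\kappa(n,Z_n^k(\ell),C_n^k(\ell))\one_{\{\ell<n^{1/3}\tau_n^k(r)\}}\bigr] = O(n^{-1/3}),
\end{equation*}
and Jensen's inequality upgrades this to $\E[\sup (\Delta \tilde Z_n^k)^2] = O(n^{-1/6})\to 0$.

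For (4) and (5) I would exploit a Riemann-sum identity: changing variables shows that, up to a boundary term of order $n^{-2/3}Z_n^k(\fl{n^{1/3}t}) = O(n^{-1/3})$ on the stopped event,
\begin{equation*}
\int_0^t \tilde Z_n^k(s)\,ds = n^{-2/3}\sum_{\ell=0}^{\fl{n^{1/3}t}-1}Z_n^k(\ell),
\end{equation*}
and analogously for the integral appearing in (5). Thus (4) reduces to controlling $n^{-2/3}\sum_{\ell}(\sigma^2(n,Z_n^k(\ell),C_n^k(\ell))-Z_n^k(\ell))$; each summand is $O(1)$ by Lemma~\ref{lem:asympt} and there are at most $O(n^{1/3})$ of them, for a uniform $O(n^{-1/3})$ error. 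The same strategy handles (5), now using the finer expansion $\mu(n,z,c)-z = n^{-1/3}z(\lambda-n^{-2/3}c) + O(n^{-1/3})$: the pointwise $O(n^{-1/3})$ discrepancy, summed over $\fl{n^{1/3}T}$ terms with global prefactor $n^{-1/3}$, again produces $O(n^{-1/3})$ uniformly. The only mild obstacle is the bookkeeping around the stopping time --- one must verify carefully that $(Z_n^k(\ell),C_n^k(\ell))\in \Omega_n$ for every relevant index and that the boundary contribution from the final incomplete subinterval is negligible on the stopped event --- after which Lemma~\ref{lem:asympt} does essentially all the work.
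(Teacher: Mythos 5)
Your proposal is correct and follows essentially the same route as the paper: reduce every item to the uniform estimates on $\mu$, $\sigma^2$, $\kappa$ over $\Omega_n$ from Lemma~\ref{lem:asympt}, using the observation that the stopped pair $(Z_n^k(\ell),C_n^k(\ell))$ stays in $\Omega_n(r,T)$, handling (1) via the fourth-moment bound plus Jensen and (4)--(5) via the explicit formulas for the increments of $Q_n^k$ and $B_n^k$ together with a Riemann-sum comparison. No gaps.
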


\begin{proof}
	
	In order to show (1), we prove the stronger claim
	\begin{equation*}
	\lim_{n\to\infty}\E\left[ \sup_{t\le T\wedge \tau_{n}^k(r)} |\tilde{Z}_n^k(t)-\tilde{Z}_n^k(t-)|^4\right] = 0.
	\end{equation*}
	To show this, note the following string of inequalities
	\begin{align*}
	\E&\left[\sup_{t\le T\wedge \tau_n^k(r)} |\tilde{Z}_n^k(t)-\tilde{Z}_n^k(t-)|^4 \right] \le n^{-4/3} \E\left[\sup_{h\le n^{1/3}(T\wedge \hat{\tau}_n^k(r))} |Z_n^k(h+1)-Z_n^k(h)|^4 \right]\\
	&\le n^{-4/3} \sum_{h=0}^{\fl{n^{1/3}(T\wedge \hat{\tau}_n^k(r))}} \E\left[|Z_n^k(h+1)-Z_n^k(h)|^4 \right]\\
	&\le n^{-4/3} \sum_{h=0}^{\fl{n^{1/3}(T\wedge \hat{\tau}^k_n(r))}}\sup_{\Omega_n} \E \left[\E\left[(Z_n^k(h+1)-Z_n^k(h))^4\bigg| Z_n^k(h) = z, C_n^k(h) = c\right] \right]\\
	&\le Tn^{-1} \sup_{\Omega_n} \E\left[ (\beta(n,z,c)-z)^4 \right] = O(n^{-1/3}).
	\end{align*} In the third inequality above, we used the tower property and on fact that for $h\le n^{1/3}(T\wedge \hat{\tau}^k_n(r))$ both $Z_n^k(h)\le n^{1/3}r$ and $C_n^k(h)\le n^{2/3}Tr$. The fourth inequality used the Markov property of $(Z_n^k,C_n^k)$. The convergence then holds by the asymptotic result for $\kappa(n,z,c)$ shown in Lemma \ref{lem:asympt}

	To verify (2), we begin by noting that 
	\begin{equation*}
	B_n^k(h) = \sum_{j=0}^{h-1} \E\left[(Z_n^k(j+1)-Z_n^k(j))|\F_n^k(j) \right],
	\end{equation*} and hence
	\begin{equation*}
	\sup_{t\le T\wedge \tau_n^k(r)} |\tilde{B}_n^k(t)-\tilde{B}_n^k(t-)|^2 \le n^{-2/3} \sup_{h\le n^{1/3}(T\wedge \hat{\tau}_n^k(r))} \left|\E\left[Z_n^k(h+1)-Z_n^k(h)\Big|\F_n^k(h) \right] \right|^2.
	\end{equation*}
	We also note that almost surely on $h\le n^{1/3}(T\wedge \hat{\tau}^k_n(r))$
	\begin{equation*}
	\E\left[ Z_n^k(h+1) - Z_n^k(h) \Big|\F_n^k(h) \right]\le \sup_{\Omega_n} |\E[(\beta(n,z,c) - z)]| = O(1)\text{ as }n\to\infty, 
	\end{equation*} by Lemma \ref{lem:asympt}. Hence,
	\begin{align*}
	\E \left[\sup_{t\le T\wedge \tau_n^k(r)} |\tilde{B}_n^k(t)-\tilde{B}_n^k(t-)|^2 \right]&\le \E \left[ n^{-2/3} \sup_{h\le n^{1/3}(T\wedge \hat{\tau}^k_n(r))} \left|\E\left[Z_n^k(h+1)-Z_n^k(h)\Big|\F_n^k(h) \right] \right|^2 \right]\\
	&\le n^{-2/3} \cdot O(1) = O(n^{-2/3}),
	\end{align*} which argues (2).
	
	We next show (3). We begin by noting that
	$$
	Q_n^k(h) = \sum_{j=0}^{h-1} \E\left[ \left(Z_n^k(j+1) - Z_n^k(j) \right)^2\Big| \F_n^k(j)\right] - \E\left[Z_n^k(j+1)-Z_n^k(j) \Big| \F_n^k(j)\right]^2.
	$$ Hence
	\begin{align*}
	\E&\left[\sup_{t\le T\wedge \tau_n^k(r)} |\tilde{Q}_n^k(t)-\tilde{Q}_n^k(t-)|\right] \\
	&\le n^{-2/3} \E \left[\sup_{h\le n^{1/3}(T\wedge \hat{\tau}^k_n(r))} \E[(Z_n^k(h+1)-Z_n^k(h))^2 |\F_n^k(h)]+ \E[Z_n^k(h+1)-Z_n^k(h)|\F_n^k(h)]^2 \right]\\
	&\le n^{-2/3} \E\left[\sup_{h\le n^{1/3}(T\wedge \hat{\tau}^k_n(r))} \sup_{\Omega_n} \left|\E[(\beta(n,z,c)-z)^2] + (\mu(n,z,c)-z)^2\right|\right]\\
	&=n^{-2/3} \sup_{\Omega_n} \left|\sigma^2(n,z,c) + 2(\mu(n,z,c)-z)^2  \right| = O(n^{-1/3}).
	\end{align*} In the last equalities, we Lemma \ref{lem:asympt} and the observation that $\sup_{\Omega_n} \sigma^2(n,z,c) = O(n^{1/3})$.
	
	To argue claim (4), we observe
	\begin{align*}
	\left(Q_n^k(h+1)-Q_n^k(h) \right) &= \E\left[(\beta(n,Z_n^k(h),C_n^k(h)) - Z_n^k(h))^2 |\F_n^k(h) \right] \\ &\qquad\qquad  - (\mu(n,Z_n^k(h),C_n^k(h)) - Z_n^k(h))^2\\
	& = \sigma^2(n,Z_n^k(h),C_n^k(h)).
	\end{align*} 
	Therefore,
	\begin{align*}
	\sup_{h\le n^{1/3}(T\wedge \hat{\tau}_n^k(r))}\left|Q_n^k(h) - \sum_{j=0}^{h-1} Z_n^k(j) \right| &=\sup_{h\le n^{1/3}(T\wedge \hat{\tau}^k_n(r))} \left|\sum_{j=0}^{k-1}  \sigma^2(n,Z_n^k(j),C_n^k(j)) - Z_n^k(j)\right|\\
	&\le \sup_{h\le n^{1/3}(T\wedge \hat{\tau}^k_n(r))}\sum_{j=0}^{h-1} |\sigma^2(n,Z_n^k(j),C_n^k(j))- Z_n^k(j)|\\
	&\le \sum_{j=0}^{n^{1/3}T} \sup_{\Omega_n} |\sigma^2(n,z,c) - z|\\
	&= O(n^{1/3}).
	\end{align*} In the third inequality above, we used the previously discussed bounds on $Z_n^k(h)$ and $C_n^k(h)$ for all $h$ such that $h\le n^{1/3}(T \wedge \hat{\tau}^k_n(r))$ and in the last term, we used the bound for $\sigma^2(n,z,c)-z$ on $\Omega_n$ given by Lemma \ref{lem:asympt}.
	Hence
	\begin{align}\label{eqn:expandSupQtilde}
	\sup_{t\le T\wedge \tau_{n}^k(r)} \left| \tilde{Q}_n^k(t) - \int_0^t \tilde{Z}_n^k(s)\,ds\right| &\le \sup_{t\le T\wedge \tau_n^k(r)} \left| \tilde{Q}_n^k(t) - n^{-2/3}\sum_{j=0}^{\fl{n^{1/3} t}} Z_n^k(j) \right|\\
	&\qquad \qquad +\sup_{t\le T\wedge \tau_n^k(r)} \left| n^{-2/3}\sum_{j=0}^{\fl{n^{1/3} t}} Z_n^k(j) - \int_0^t \tilde{Z}_n^k(s)\,ds\right| .\nonumber
	\end{align} We can bound the first term, using the bound for $Q_n^k(h)-\sum_{j=0}^{h-1}Z_n^k(j)$ from above, to get
	\begin{align*}
	\sup_{t\le T\wedge \tau_n^k(r)} \left| \tilde{Q}_n^k(t) - n^{-2/3}\sum_{j=0}^{\fl{n^{1/3} t}} Z_n^k(j) \right|& \le n^{-2/3} \sup_{h\le n^{1/3}(T\wedge \hat{\tau}_n(r))} \left|Q_n^k(h) - \sum_{j=0}^{h-1} Z_n^k(j) \right|\\&=O(n^{-1/3}).
	\end{align*}
	We can bound the second term as follows
	\begin{align*}
	\sup_{t\le T\wedge \tau_n^k(r)} &\left|n^{-2/3} \sum_{j=0}^{\fl{n^{1/3}t}} Z_n^k(j) -\int_0^t \tilde{Z}_n^k(s)\,ds \right|\\
&\le \sup_{t\le T\wedge \tau_n^k(r)}  \left|n^{-2/3}\int_0^{\fl{n^{1/3}t}+1} Z_n^k(\fl{u})\,du - \int_0^t \tilde{Z}_n^k(s)\,ds \right|\\
&\le \sup_{t\le T\wedge \tau_n^k(r)} \left|\int_0^{n^{-1/3}(\fl{n^{1/3}t}+1)} \tilde{Z}_n^k(s)\,du -\int_0^t \tilde{Z}_n^k(s)\,ds \right|\\
&\le r\sup_{t\le T} \left|t-n^{-1/3}(\fl{n^{1/3}t}+1) \right|\longrightarrow 0.
	\end{align*} The above bounds hold almost surely. This proves the convergence in (4).
	
	We lastly establish (5). We begin by noting that
	$$
	B_n^k(h+1) -B_n^k(h) = \E\left[Z_n^k(h+1)-Z_n^k(h)|\F_n^k(h) \right] =  \mu(n,Z_n^k(h),C_n^k(h))-Z_n^k(h).
	$$ Hence,
	\begin{equation*}
	B_n^k(h) = \sum_{j=0}^{h-1} \mu(n,Z_n^k(j),C_n^k(j))-Z_n^k(j).
	\end{equation*} Therefore, almost surely we have
	\begin{align*}
	\sup_{h\le n^{1/3}(T\wedge \hat{\tau}^k_n(r))} &\left|B_n^k(h) - \sum_{j=0}^{h-1} n^{-1/3}Z_n^k(j) (\lambda -n^{-2/3}C_n^k(j)) \right| \\
	&= \sup_{h\le n^{1/3}(T\wedge \hat{\tau}^k_n(r))} \left| \sum_{j=0}^{h-1}( \mu(n,Z_n^k(j),C_n^k(j)) - Z_n^k(j)) -n^{-1/3}Z_n^k(j)(\lambda -n^{-1/3}C_n^k(j)) \right|\\
	&\le \sup_{h\le n^{1/3}(T\wedge \hat{\tau}^k_n(r))}\sum_{j=0}^{h-1} \left|\mu(n,Z_n^k(j),C_n^k(j)) - Z_n^k(j) -n^{-1/3}Z_n^k(j)(\lambda -n^{-1/3}C_n^k(j))\right|\\
	&\le Tn^{1/3} \sup_{\Omega_n} \left| \mu(n,z,c) - z - n^{-1/3}z (\lambda -n^{-2/3}c) \right| = O(1).
	\end{align*}
	
	Hence,
	\begin{align*}
	\sup_{t\le T\wedge \tau_n^k(r)}&\left| \tilde{B}_n^k(t)- \int_0^t (\lambda -\tilde{C}_n^k(s))\tilde{Z}_n^k(s) \,ds\right|\\
	&\le \sup_{h\le n^{1/3}(T\wedge \hat{\tau}_n^k(r))} \left|n^{-1/3}B_n^k(h) - n^{-1/3}\sum_{j=0}^{h-1} n^{-1/3}Z_n^k(j) (\lambda -n^{-2/3}C_n^k(j)) \right|\\
	&\qquad +\sup_{t\le T\wedge \tau_n^k(r)} \left|n^{-1/3}\sum_{j=0}^{\fl{n^{1/3}t}} n^{-1/3}Z_n^k(j) (\lambda -n^{-2/3}C_n^k(j))  - \int_0^t (\lambda - \tilde{C}_n^k(s))\tilde{Z}_n^k(s)\,ds\right|.
	\end{align*} By factoring out an $n^{-1/3}$ from the first term on the right-hand side, it is easy to see that term is $O(n^{-1/3})$ almost surely. Examining the second term on the right-hand side, we get almost surely
	\begin{align*}
	&\sup_{t\le T\wedge \tau_n^k(r)} \left|n^{-1/3}\sum_{j=0}^{\fl{n^{1/3}t}} n^{-1/3}Z_n^k(j) (\lambda -n^{-2/3}C_n^k(j))  - \int_0^t (\lambda - \tilde{C}_n^k(s))\tilde{Z}_n^k(s)\,ds\right|\\
	&\quad= \sup_{t\le T\wedge \tau_n^k(r)} \left|n^{-1/3}\int_0^{\fl{n^{1/3}t}+1} n^{-1/3}Z_n^k(\fl{u})(\lambda -n^{-2/3} C_n^k(\fl{u}))\,du - \int_0^t (\lambda-\tilde{C}_n^k(s))\tilde{Z}_n^k(s)\,ds \right|\\
	&\quad= \sup_{t\le T\wedge\tau_n^k(r)} \left|\int_0^{n^{-1/3}(\fl{n^{1/3}t}+1)}\tilde{Z}_n^k(s)(\lambda - \tilde{C}_n^k(s))\,ds  - \int_0^t \tilde{Z}_n^k(s)(\lambda - \tilde{C}_n^k(s))\,ds\right|\\
	&\quad= (|\lambda|+Tr)r \sup_{t\le T}\left|t-n^{-1/3}(\fl{n^{1/3}t}+1) \right| \to 0.
	\end{align*}
	
	 This proves the lemma.

\end{proof}

\subsection{Existence and uniqueness lemma, and a corollary} \label{sec:zthm}

Using the functional central limit machinery found in \cite[Chapter 7]{EK86}, it is not difficult to argue Lemma \ref{thm:zconv} from Lemma \ref{lem:ekVerify} along with the following existence and uniqueness lemma: \begin{lem}\label{lem:uniquenessLemma}
	Fix a $\lambda\in \R$ and an $x\ge 0$. 
	\begin{enumerate}
		\item There exists a unique strong solution to the following stochastic differential equation
		\begin{equation}\label{eqn:zeqn.2}
		\begin{split}
		d\Z(t) &= \sqrt{\Z(t)} dW(t) + \left(\lambda - \C(t) \right)\,\Z(t)\,dt,\qquad \Z(0) = x\\
		d\C(t) &= \Z(t)\,dt,\qquad \C(0) = 0,
		\end{split}
		\end{equation} which is absorbed upon $\Z$ hitting zero.
		\item Given a weak solution $(\Z,\C)$ to the equation \eqref{eqn:zeqn.2}, then on an enlarged probability space there exists a Brownian motion $B$ such that $(\Z,\C)$ solves
		\begin{equation}\label{eqn:ctsTC}
		\Z(t) = x + \X^\lambda \left( \C(t) \wedge T_{-x}\right),
		\end{equation} where $\X^\lambda(t) = B(t) + \lambda t -\frac{1}{2}t^2$.
		\item Given $\X^\lambda(t) = B(t) +\lambda t-\frac{1}{2}t^2$ for a Brownian motion $B$ there exists a path-wise unique solution $(\Z,\C)$ where $\C(t) = \int_0^t \Z(s)\,ds$ and such a solution is a weak solution to \eqref{eqn:zeqn.2}.
	\end{enumerate}
\end{lem}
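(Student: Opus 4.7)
The plan is to prove the three parts in the order (3), (2), (1): part (3) gives an explicit weak-solution construction from $\X^\lambda$, part (2) identifies any weak solution with such a construction via a Dambis-Dubins-Schwarz time change, and part (1) then follows by combining weak existence with a Yamada-Watanabe-type pathwise uniqueness argument.

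For part (3), I propose a Lamperti-style time change. Set $Y(u) = x + \X^\lambda(u)$ and $T_{-x} = \inf\{u : Y(u) = 0\}$. Because $Y > 0$ on $[0, T_{-x})$ and behaves like $c\sqrt{T_{-x}-u}$ near the endpoint, the clock $a(v) := \int_0^v du/Y(u)$ is strictly increasing on $[0,T_{-x})$ and extends continuously to $T_{-x}$ with $a(T_{-x}) < \infty$. Let $A := a^{-1}$, define $\Z(t) := Y(A(t))$ for $t \le a(T_{-x})$ and $\Z(t):=0$ afterwards, and set $\C(t) := A(t) \wedge T_{-x}$. The chain rule gives $\C'(t) = \Z(t)$, and the identity $\lambda \C(t) - \tfrac{1}{2}\C(t)^2 = \int_0^t (\lambda - \C(s))\Z(s)\, ds$ yields
\[
\Z(t) \;=\; x + B(\C(t)) + \int_0^t (\lambda - \C(s))\Z(s)\, ds.
\]
Since $B(\C(\cdot))$ is a continuous local martingale with quadratic variation $\int_0^t \Z(s)\, ds$, a standard time-change representation produces a Brownian motion $W$ on an enlarged probability space with $B(\C(t)) = \int_0^t \sqrt{\Z(s)}\, dW(s)$; hence $(\Z,\C)$ is a weak solution to \eqref{eqn:zeqn.2}. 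Pathwise uniqueness of $(\Z,\C)$ given $\X^\lambda$ follows at once because $a$ is strictly monotone, so $A$, $\Z$, and $\C$ are read off from $\X^\lambda$ with no choice.

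For part (2), given any weak solution $(\Z, \C, W)$, the martingale $M(t) = \int_0^t \sqrt{\Z(s)}\, dW(s)$ has quadratic variation $\C(t) = \int_0^t \Z(s)\, ds$. Dambis-Dubins-Schwarz on an enlarged probability space (needed because $\C(\infty)$ may be finite) provides a Brownian motion $B$ with $M(t) = B(\C(t))$. Substituting back and integrating the $\C$-drift by parts as above recovers $\Z(t) = x + \X^\lambda(\C(t))$ on $\{t : \Z(t) > 0\}$; the absorption of $\Z$ at $0$ manifests as $\C$ freezing at $T_{-x}$, which produces the $\wedge T_{-x}$ in \eqref{eqn:ctsTC}.

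For part (1), weak existence is supplied by part (3), so it remains to establish pathwise uniqueness and invoke Yamada-Watanabe. Given two solutions $(\Z_i, \C_i)$ driven by the same $W$ from the same initial condition, localize at $\tau_R := \inf\{t : \Z_1(t) \vee \Z_2(t) > R\}$; non-explosion ($\tau_R \uparrow \infty$) follows from dropping the non-positive term $-\C\Z$ to dominate $\Z$ by a Feller branching diffusion with linear drift $\lambda$. Apply the Yamada-Watanabe smoothing functions $\phi_n$ to $D := \Z_1 - \Z_2$. The diffusion term is controlled by the classical estimate $\phi_n''(D)(\sqrt{\Z_1} - \sqrt{\Z_2})^2 \le \phi_n''(D)|D| \le 2/n$, which vanishes as $n \to \infty$. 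The drift difference splits as $(\lambda - \C_2)D - (\C_1 - \C_2)\Z_1$, bounded on $[0,\tau_R]$ by $C_R \bigl(|D(s)| + \int_0^s |D(u)|\, du\bigr)$ using the elementary estimate $|\C_1(s) - \C_2(s)| \le \int_0^s |D(u)|\, du$. Sending $n \to \infty$ and applying Gronwall yields $\E|D(t)| = 0$ on $[0, t \wedge \tau_R]$, and finally $R \to \infty$ completes pathwise uniqueness. The principal subtlety is the path-dependence of the drift through $\C$, but this one-step $L^1$-bound reduces it to a standard iterated Gronwall, so this is not a serious obstacle.
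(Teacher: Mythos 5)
Your proposal is correct and follows essentially the same route as the paper: part (2) via Dambis--Dubins--Schwarz on an enlarged probability space, part (3) via the Lamperti time change $a(v)=\int_0^v du/(x+\X^\lambda(u))$ and its inverse, and part (1) via Yamada--Watanabe. The one place you go beyond the paper is part (1): the paper simply cites the Yamada--Watanabe theorem, whose standard hypotheses do not literally cover the path-dependent drift $(\lambda-\C(t))\Z(t)$, whereas your localization at $\tau_R$, the elementary bound $|\C_1(s)-\C_2(s)|\le\int_0^s|\Z_1(u)-\Z_2(u)|\,du$, and the iterated Gronwall argument supply exactly the pathwise-uniqueness step that the bare citation glosses over.
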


\begin{remark}
	We observe that the SDE in equation \eqref{eqn:zeqn.2} does not have a $\frac{1}{2}$, while in the integrated form found in equation \eqref{eqn:zsde1} there is such a term. This is because$$
	\int_0^t \C(s)\,\Z(s) \,ds = \int_0^t \C(s)\,d\C(s) = \frac{1}{2} \C(t)^2.
	$$
\end{remark}
\begin{proof}
	The strong existence and uniqueness in first item follows from the Yamada-Watanabe theorem \cite[Theorem 1]{YW71}. The absorption upon $\Z$ is obvious by stopping $(\Z,\C)$ upon $\Z$ hitting zero and observing that this stopped process still solves \eqref{eqn:zeqn.2}.
	
	The path-wise existence found in the third item follows from known theorems on random time-changes. See, for example \cite[Chapter VI, Section 1]{EK86}, \cite{CLU09} or \cite[Section 2]{CPU13}. 
	
	Now suppose that $(\Z,\C)$ solves \eqref{eqn:zeqn.2} for a Brownian motion $W$. We observe that the quadratic variation of $\Z$ is given by
	$$
	\< \Z\>(t) = \int_0^t \Z(s)\,ds.
	$$ Define the process $M(t) = \int_0^t \sqrt{\Z(s)}\,dW(s)$. Define $V(t) = \inf\{s:\C(s)>t\}$ with the convention that $\inf\emptyset = \infty$.
	
	Hence, by the Dambis, Dubins-Schwarz theorem \cite[Chapter V, Theorem 1.7]{RY99}, on an enlarged probability space, there exists a Brownian motion $\tilde{B}$ such that process
	$$
	B(t) = \left\{\begin{array}{ll}
	M(V(t))&: t< \int_0^\infty \Z(s)\,ds\\
	M(\infty) + \tilde{B}_{t-\<M\>(\infty)} &: t\ge \int_0^\infty \Z(s)\,ds.
	\end{array}
	\right.
	$$ is a Brownian motion. 
	
	We then have for $t<\int_0^\infty \Z(s)\,ds$:
	\begin{align*}
	\Z(V(t)) &=x+ M(V(t)) + \int_0^{V(t)} (\lambda - \C(s)) \Z(s)\,ds\\
	&= x+ B(t) + \int_0^t \left(\lambda - s \right)\,ds\\
	&= x+\X^\lambda(t).
	\end{align*} Observe $t<\int_0^\infty \Z(s)\,ds$ occurs if and only if $\Z(V(t))>0$. Indeed, since $\Z$ is continuous, non-negative and absorbed upon reaching zero then by \cite[Lemma 0.4.8]{RY99} $t\le \C(V(t)) = \int_0^{V(t)}\,\Z(s)\,ds < \int_0^\infty \Z(s)\,ds$ if and only if $\int_{V(t)}^\infty \Z(s)\,ds> 0$ which occurs if and only if $\Z(V(t))>0$.

	Hence, we can rewrite the above string of equalities as 
	\begin{equation*}
	\Z(V(t)) = x +\X^\lambda (t \wedge T_{-x}), 
	\end{equation*} which now holds for all $t$. Indeed, if $t\ge \int_0^\infty \Z(s)\,ds = \int_0^\zeta \Z(s)\,ds$ then $\V(t) = \C(\zeta) = T_{-x}$. Since $V$ and $\C$ are two-sided inverses of each other prior to $V(t)=\infty$, the above equation implies equation \eqref{eqn:ctsTC} holds.
	
	Reversing the above steps, gives the implication in the third item.
\end{proof}

\begin{lem}\label{lem:compactsupport}
	Let $(\Z,\C)$ be a solution of \eqref{eqn:zeqn.2}. Then almost surely $$
	\zeta:=\inf\{t:\Z(t) = 0\} <\infty.
	$$
\end{lem}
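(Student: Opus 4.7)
The plan is to use the time change furnished by Lemma~\ref{lem:uniquenessLemma} to rewrite $\zeta$ as an explicit functional of $\X^\lambda$, and then show that this functional is a.s.\ finite.

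First, since $\X^\lambda(t) = B(t) + \lambda t - \tfrac{1}{2}t^2 \to -\infty$ almost surely, the hitting time $T_{-x} = \inf\{t : \X^\lambda(t) = -x\}$ is a.s.\ finite. By Lemma~\ref{lem:uniquenessLemma}(3), the pair $(\Z,\C)$ can be built from $\X^\lambda$ by setting $\Y(u) := x + \X^\lambda(u)$ on $[0,T_{-x}]$, defining $V(u) := \int_0^u \Y(v)^{-1}\,dv$ on $[0,T_{-x})$, and taking $\C$ to be the right-continuous inverse of $V$ with $\Z(t) = \Y(\C(t))$. Since $\Z$ first vanishes precisely when $\C$ reaches $T_{-x}$, one finds
\[
\zeta \;=\; \lim_{u \uparrow T_{-x}} V(u) \;=\; \int_0^{T_{-x}} \frac{du}{x + \X^\lambda(u)},
\]
with the convention that $\zeta = \infty$ exactly when the integral diverges. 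The lemma thus reduces to showing that the above integral is a.s.\ finite; the integrand is continuous and strictly positive on $[0,T_{-x})$, so the only potential obstruction is the singularity at $u = T_{-x}$.

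To handle that singularity, I would pass to Brownian motion via Girsanov's theorem: for each fixed $M>0$, on the event $\{T_{-x}\le M\}$ the restriction of $\X^\lambda$ to $[0,M]$ has a law equivalent to Wiener measure on $[0,M]$, the Radon--Nikodym derivative being the Dol\'eans exponential of $-\int_0^{M}(\lambda-v)\,dB(v)$. Under the equivalent measure, $\X^\lambda$ restricted to $[0,M]$ is a standard Brownian motion $\tilde B$, and the integral becomes $\int_0^{\tau} (x + \tilde B(u))^{-1}\,du$, with $\tau$ the first hit of $-x$ by $\tilde B$. Letting $M \to \infty$ and invoking $T_{-x}<\infty$ a.s., it suffices to prove the Brownian analogue $\int_0^{\tau} (x + \tilde B(u))^{-1}\,du < \infty$ almost surely.

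For this Brownian analogue I would invoke the classical Lamperti transform (see, e.g., \cite{CLU09}): the above integral equals the extinction time of the Feller diffusion $dY = \sqrt{Y}\,dW$ with $Y(0)=x$, and Feller's diffusion is well known to go extinct in finite time almost surely. The intuition is that $x+\tilde B$ has Brownian fluctuations $\sim\sqrt{h}$ as one approaches $\tau$, producing an integrable $1/\sqrt{\tau-u}$-type singularity. The main obstacle is really this last fact about Feller extinction; identifying the time-change integral with a Feller extinction time bypasses any direct local-time or Ray--Knight computation, and Girsanov cleanly reduces the parabolic-drift case to the Brownian one.
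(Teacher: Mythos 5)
Your proposal is correct and follows essentially the same route as the paper: both identify $\zeta$ with the time-change integral $\int_0^{T_{-x}} (x+\X^\lambda(u))^{-1}\,du$ and then use Girsanov to reduce a.s.\ finiteness to the Brownian case. You go one step further than the paper by actually justifying the Brownian case (via the identification with the extinction time of the Feller diffusion), whereas the paper simply asserts it; this is a welcome addition rather than a deviation.
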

\begin{proof}
	We let $\X^\lambda(t)$ be the process define in Lemma \ref{lem:uniquenessLemma}(2) and such that $(\Z,\C)$ solves \eqref{eqn:ctsTC}. We can write $\C$ as a function of just the process $\X^\lambda$. Indeed let $T_{-x}$ be the first hitting time of $-x$ of $\X^{\lambda}(t)$, then
	$$
	\C(t) = \inf\left\{s: \int_0^s \frac{1}{x+\X^\lambda(u\wedge T_{-x}) } \,du = t\right\}.
	$$ This is a simple calculus exercise and the proof can be found in \cite[Section 2]{CPU13} and in \cite[Chapter VI, Section 1]{EK86}.
	
	We note that almost surely
	$$
	I:=\int_0^{T_{-x}} \frac{1}{x+\X^{\lambda}(u\wedge T_{-x})}\,du <\infty.
	$$ Indeed, this is true if we replace $\X^\lambda$ with a Brownian motion $B$ and Girsanov's theorem \cite[Chapter VIII]{RY99} implies that it is true almost surely for the $\X^\lambda$ as well. Hence $$
	\zeta = \inf\{t:x+\X^\lambda(\C(t)) = 0\} =\inf\{s: \C(s) = T_{-x}\} = I <\infty.
	$$
\end{proof}

\section{Convergence of the Cumulative Cousin Process} \label{sec:klim}

We begin by recalling the notation in Theorem \ref{thm:kconv}. The vertices are labeled by $w_n^k(j)$ for $j=0,1,\dotsm, m-1$ in a breadth-first order. Each vertex $w_n^k(j)$ at some height $h = \hgt_n^k(w_n^k(j))$ and for this $j$ and $h$, we have
$$
\csn_n^k(w_n^k(j)) = Z_n^k(h).
$$

\begin{proof}[Proof of Theorem \ref{thm:kconv} and Part (1) of Corollary \ref{cor:2}]
	Throughout the proof we let $k = k(n,x) = \fl{n^{1/3}x}$. We also omit reference to the vertex $w_n^k$ when using the $\csn$ statistic and write
	\begin{equation*}
	\csn_n^k (j) = \csn_n^k(w_n^k(j)). 
	\end{equation*} Recall that we have defined $\displaystyle C_n^k(h) = \sum_{j=0}^h Z_n^k(j)$. By Lemma \ref{thm:zconv} and the Skorohod representation theorem, we can and do assume that 
	\begin{equation*}
	\left(\left(n^{-1/3}Z_n^k(\fl{n^{1/3}t},n^{-2/3}C_n^k(\fl{n^{1/3}t}) \right);t\ge0 \right)\longrightarrow \left((\Z(t),\C(t));t\ge 0 \right),\qquad a.s.
	\end{equation*} where $(\Z,\C)$ is a weak solution of the stochastic differential equation in \eqref{eqn:zeqn.2}. By Lemma \ref{lem:compactsupport}, we can define
	\begin{equation*}
	\Z(\infty) = 0.
	\end{equation*}
	Let $A_n(k) := \sum_{h\ge 0} Z_{n}^k(h)$ be the total number of vertices ever infected.
	
	We assume that this occurs on the probability space $(\Omega,\F,\P)$. Due to the Skorohod representation theorem, Theorem 1 in \cite{MartinLof98}, and the equality in distribution between 
	\begin{equation*}
		\int_0^\infty \Z(s)\,ds = T_{-x} = \inf\{t: \X^\lambda(s) = -x\}
	\end{equation*} following from Lemma \ref{lem:uniquenessLemma}, without loss of generality we can assume that
	\begin{equation}
	\label{eqn:AnkAS}
	n^{-2/3}A_n(k)\to \int_0^\infty \Z(s)\,ds,\qquad \text{a.s.}
	\end{equation} 
	
	We break the proof down into several steps.
	\begin{enumerate}
		\item[Step 1:] Show that $\displaystyle\left(n^{-1/3}\csn_n^k \circ C_n^k (\fl{n^{1/3}r});r\ge0 \right) \to \left( \Z(r);r\ge 0 \right)$ a.s. in $\D(\R_+,\R_+)$.
		\item[Step 2:] Show that for large values of $t$, $\displaystyle  n^{-1/3}\csn_n^k(\fl{n^{2/3}t}) \to \Z(S_t(\C))$ a.s. in $\R$, for some time-change $S_t(\C)$. This convergence is in $\R$.
		\item[Step 3:] We argue the same convergence as step two holds for small values of $t$.
		\item[Step 4:] We argue that Steps 2 and 3 imply convergence in the Skorohod space.
		\item[Step 5:] We argue that $\displaystyle \int_0^{S_t(\C)} \Z(s)^2\,ds =  \int_0^{t\wedge T_{-x}} x+\X^\lambda (s)\,ds.$
	\end{enumerate}
	
	\textbf{Step 1:} With the convention that we start labeling the breadth-first order at $0$, it is a simple counting argument to see that $w_n^k(C_n^k(h))$ is the first vertex in the breadth-first ordering that is at distance $h+1$ from the root in its connected component. Thus $$
	\{ w_n^k(j): 0\le j \le C_n^k(h)-1\}  =\left\{v\in \G_n :\hgt_n^k(v)\le h\right\}.
	$$ See also Figure \ref{fig:comp}. Therefore, \begin{equation*}
\csn_n^k(C_n^k(r)) =Z_n^k(r).
	\end{equation*}  Consequently,
	\begin{equation*}
	\left(n^{-1/3}\csn_n^k(C_n^k(\fl{n^{1/3}r}));r\ge0\right)\longrightarrow \left(\Z(r);r\ge 0 \right)
	\end{equation*}

	\textbf{Step 2:} The next part of the proof mimics part of the proof of Theorem 1.5 in \cite{Duquesne09}. 
	We define for each $f\in \D(\R_+,\R_+)$ and $y \in \R_+$ the function
	$$
	S_y(f) = \inf\{t: f(t)\vee f(t-)>y\},
	$$ where $\inf\emptyset = \infty$. Set $\mathcal{V}(f) = \{y\in \R_+: S_{y-}(f) < S_{y}(f)\}$. By  Lemma 2.10 and Proposition 2.11 in  \cite[Chapter VI]{JS87}, for each fixed $y$, $f\mapsto S_y(f)$ is a measurable map from $\D(\R_+,\R_+)\to \R$ which is continuous at each $f$ such that $y\notin\mathcal{V}(f)$.

	Define $\zeta = \inf\{t:\Z(t) = 0\}$ which is finite by Lemma \ref{lem:compactsupport}. We observe that $\C(t)$ is a strictly increasing continuous function for all $t\in [0,\zeta)$, since its derivative is strictly positive, and $\C(t) = \C(\zeta) = \int_0^\infty \Z(s)\,ds$ for all $t\ge \zeta$. In particular, $S_{\C(\zeta)}(\C) = +\infty$ whereas $S_{\C(\zeta)-}(\C) = \zeta<\infty$. Therefore, we have $\mathcal{V}(\C) = \{\C(\zeta)\}$ a.s.
	
	We recall from \eqref{eqn:AnkAS} with the notation above, that for each $\eps>0$ there exists an $ N(\w)<\infty$ such that $n^{-2/3}A_n(k) < \C(\zeta)+\eps$ for all $n\ge N(\w)$. For those $n$ sufficiently large, we have almost surely and for each $t>\C(\zeta)+\eps$
	\begin{align*}
	n^{-1/3}\csn_n^k(\fl{n^{2/3} t}) &= 0=  \Z(S_t(\C)) 
	\end{align*} where we use the fact that $S_t(\C) = \infty$ for all $t\ge \C(\zeta) = \sup_{s} \C(s)$. Indeed, $A_n(k) < n^{2/3}(C(\zeta)+\eps)< n^{2/3}t$ and so $\csn_n^k(\fl{n^{2/3}t}) = 0$ by definition (see around equation \eqref{eqn:kDef}) for those $t$ sufficiently large.
	
	By taking $\eps\downarrow 0$, we have argued that almost surely 
	\begin{equation}\label{eqn:csnConv_bigt}
	n^{-1/3} \csn_n^k(\fl{n^{2/3}t}) \to \Z(S_t(\C))\qquad \forall t> \C(\zeta),
	\end{equation} where the convergence is convergence as real numbers. Let $\mathscr{N}_>\subset \Omega$ denote the null set for which the above state does not hold.
	
	\textbf{Step 3:} We now argue that \eqref{eqn:csnConv_bigt} also holds for $t<\C(\zeta)$. To begin, we define the process $V_n^k = (V_n^k(j);j=0,1\dotsm)$ by
	\begin{equation*}
	V_n^k(j) = \min\{h: C_n^k(h)> j\}.
	\end{equation*} We observe that if vertex $w_n^k (j)$ is at height $h$, then $V_n^k(j) = h$ because we start indexing the vertices at zero.  Hence,
	\begin{equation*}
	|C_n^k(V_n^k(j)) -j | \le Z_n^k(V_n^k(j)).
	\end{equation*} Indeed, the first labeled vertex at height $h$ is $w_n^k(C_n^k(h-1))$ and the last vertex of height $h$ is $w_n^k(C_n^k(h-1)+ Z_n^k(h) - 1)$. 
	In particular, 
	\begin{equation}\label{eqn:CV}
		\left|n^{-2/3} C_n^{k}(V^{k}_n(\fl{n^{2/3}t})) - t\right| \le \sup_{h\ge 0} n^{-2/3} Z_n^k(h) \longrightarrow  \to 0.
	\end{equation}
	
	We now look at the events
	$$
	E_q = \{q<\C(\zeta)\}.
	$$ We observe that for there exists a null set $\mathscr{N}_q\subset E_q$, such that for each $\w\in E_q\setminus\mathscr{N}_q$ and for every $t\in[0,q]$ we have the following convergence of real numbers 
	\begin{equation}\label{eqn:vconv}
	n^{-1/3}V_n^k(\fl{n^{2/3} t}) = S_t \left( n^{-2/3}C_n^k(\fl{n^{1/3}\cdot})\right) \longrightarrow S_t(\C),
	\end{equation} by the aforementioned continuity of $S_t(\cdot)$ at those $f\in\D(\R_+,\R_+)$ with $t\notin\mathcal{V}(f)$. We therefore have for each $\w\in E_q\setminus \mathscr{N}_q$ 
	$$
	n^{-1/3}\csn_n^k\left(C_n^k\left( V_n^k(\fl{n^{2/3}t})\right) \right) \to \Z(S_t(\C)),\qquad \forall t\in[0,q]
	$$ Indeed, this follows from \cite[Lemma pg 151]{Billingsley99} and the observation $n^{-1/3}\csn_n^k \circ C_n^k(\fl{n^{1/3}\cdot})$ converges in the $J_1$ topology to the continuous function $t\mapsto \Z(S_t(\C))$ for $t\in[0,\zeta)$.
	Still working with an $\w\in E_q\setminus \mathscr{N}_q$, we now observe that
	\begin{align*}
	&\left| n^{-1/3} \csn_n^k \left(C_n^k\left( V_n^k(\fl{n^{2/3}t})\right) \right)  - n^{-1/3} \csn_n^k(\fl{n^{2/3} t}) \right| \longrightarrow 0  ,
	\end{align*} by equation \eqref{eqn:CV} and \cite[Lemma pg. 151]{Billingsley99}. By taking the union over all $q\in \mathbb{Q}\cap\R_+$, we have argued Step 3.
	
	\textbf{Step 4:} We have shown that outside of the null set $\mathscr{N} = \mathscr{N}_> \cup \bigcup_{q\in \mathbb{Q}\cap\R_+} \mathscr{N}_q$ that
	\begin{equation}\label{eqn:csnconv_allt}
	n^{-1/3}\csn_n^k (\fl{n^{2/3}t})  \longrightarrow \Z(S_t(\C)) ,\qquad \forall t\neq \zeta(\w),
	\end{equation} where the convergence is a real numbers. 
	
	We now argue that for each such $\w\in \Omega\setminus \mathscr{N}$ and each $t\neq \zeta(\w)$
	\begin{equation}\label{eqn:csnjumps}
	\sum_{0\le s\le t}\left( n^{-1/3}\csn_n^k(\fl{n^{2/3}s}) - n^{-1/3}\csn_n^k(\fl{n^{2/3}s-}) \right)^2 \longrightarrow 0.
	\end{equation}
	Towards this end we have the following string of inequalities
	\begin{align*}
	\sum_{0\le s\le t}&\left( n^{-1/3}\csn_n^k(\fl{n^{2/3}s}) - n^{-1/3}\csn_n^k(\fl{n^{2/3}s-}) \right)^2 \le n^{-2/3} \sum_{h = 0}^{V_n^k(\fl{n^{2/3}t})} \left(Z_n^k(h)-Z_n^k(h-1)\right)^2\\
	&= \int_0^{V_n^k(\fl{n^{2/3}t})+1} \left( Z_{n}^k (\fl{u}) - Z_n^k(\fl{u}-1\right)^2\,du\\
	&= \int_0^{n^{-1/3} V_n^k (\fl{n^{2/3} t}) + n^{-1/3} } \left(n^{-1/3}Z_n^k(\fl{n^{1/3}u}) - n^{1/3}Z_n^k(\fl{n^{1/3}u}-1) \right)^2\,du \\
	&\longrightarrow \int_0^{S_t(\C)} \left(\Z(u)-\Z(u)\right)^2\,du = 0.
	\end{align*} The first inequality comes from examining the jumps of $\csn_n^k(\fl{n^{2/3}s})$ occur when $h = n^{2/3}s \in \mathbb{Z}$ and the jump is of size $Z_n^{k}(h)-Z_n^{k}(h-1)$. The a.s. convergence for the integral follows from the time-change lemma in \cite[pg. 151]{Billingsley99} and the convergence in \eqref{eqn:vconv}. 
	
	By Theorem 2.15 in \cite[Chapter VI]{JS87}, equations \eqref{eqn:csnconv_allt} and \eqref{eqn:csnjumps}  imply that for all $\w\in \Omega\setminus \mathscr{N}$
	\begin{equation*}
	\left(n^{-1/3} \csn_n^k(\fl{n^{2/3}t}); t\ge0 \right) \longrightarrow \left( \Z(S_t(\C)) ; t\ge0\right) \qquad \text{ in } \D(\R_+,\R).
	\end{equation*}
	
	\textbf{Step 5:} We let $\X^\lambda$ be the Brownian motion with parabolic drift related to the processes $(\Z,\C)$ in Lemma \ref{lem:uniquenessLemma}(2). The theorem follows from the following change of variables
	\begin{align*}
	\Z(S_t(C)) &=   x+ \X^\lambda (\C(S_t(\C))) = x+ \X({t\wedge T_{-x}})
	\end{align*} where we used the relationship $\C(S_t(\C)) = t\wedge C(\zeta) = t \wedge T_{-x}$.
	
	The rest of Theorem \ref{thm:kconv} now follows from integration.
\end{proof}

\section{A Self-Similarity Result}\label{sec:ss}

We first observe the following relationship in $\lambda$ for the process $\X^\lambda$. Namely,
\begin{equation*}
\left(\left( \X^\lambda(t_0+t) - \X^\lambda(t_0); t\ge 0  \right)\bigg| \X^\lambda(t_0) = \inf_{s\le t_0} \X^\lambda(s) \right) \overset{d}{=} \left( \X^{\lambda-t_0}(t);t\ge0 \right).
\end{equation*} This observation was used by Aldous \cite{A97_1} to simplify the description of the (time-inhomogeneous) excursion measure of $\X^\lambda$ at time $t$ to the excursion measure of $\X^{\lambda-t}$ at time 0. See also \cite{ABG12}.

A similar result will hold in our situation as well. We state it in the following theorem
\begin{thm}
	Let $\Z^{\lambda}_x(t), \C^{\lambda}_x(t)$ denote the solution to 
	\begin{equation*}\begin{split}
	d\Z^{\lambda}_x (t) &= \sqrt{\Z^\lambda_x(t)}\,dW(t)+ \left(\lambda - \C^{\lambda}_x(t) \right) \Z^{\lambda}_x(t)\,dt,\qquad \Z^\lambda_x(0) = x\\
	d\C^\lambda_x(t) &= \Z^\lambda_x(t)\,dt \qquad \C^{\lambda}_x(0) = 0
	\end{split}.
	\end{equation*}
	
	Then the following self-similarity result holds for any $t_0>0$, $z>0$ and $\mu>0$:
	\begin{equation}\label{eqn:selfsim.1}
	\left( \left(\left(\Z^{\lambda}_x(t_0+t), \C^{\lambda}_x(t_0+t)\right);t\ge0 \right) \Big| \Z^\lambda_x(t_0) = z, \C^{\lambda}_x(t_0) = \mu\right) \overset{d}{=} \left(\left(\Z^{\lambda-\mu}_z(t),\C^{\lambda-\mu}_z(t)\right);t\ge 0\right)
	\end{equation}
\end{thm}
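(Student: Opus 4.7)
The plan is to reduce the identity to a change of variables together with the pathwise uniqueness of the SDE \eqref{eqn:zeqn.2}, which was established in Lemma \ref{lem:uniquenessLemma}. Because this SDE is autonomous, the pair $(\Z^\lambda_x, \C^\lambda_x)$ is a time-homogeneous Markov process on $\R_+\times \R_+$, and the conditional law on the left-hand side of \eqref{eqn:selfsim.1} is well-defined through its transition kernel.

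First, on the event $\{\Z^\lambda_x(t_0) = z,\ \C^\lambda_x(t_0) = \mu\}$, I would introduce the shifted processes
$$\widehat{\Z}(t) := \Z^\lambda_x(t_0+t), \qquad \widehat{\C}(t) := \C^\lambda_x(t_0+t) - \mu,$$
and the shifted driving Brownian motion $\widetilde{W}(t) := W(t_0+t) - W(t_0)$, which is a standard Brownian motion independent of $\F_{t_0}$ since $t_0$ is deterministic. Substituting $\C^\lambda_x(t_0+t) = \mu + \widehat{\C}(t)$ into the equation for $\Z^\lambda_x$ yields
$$d\widehat{\Z}(t) = \sqrt{\widehat{\Z}(t)}\, d\widetilde{W}(t) + \bigl((\lambda - \mu) - \widehat{\C}(t)\bigr)\widehat{\Z}(t)\,dt, \qquad \widehat{\Z}(0) = z,$$
together with $d\widehat{\C}(t) = \widehat{\Z}(t)\,dt$ and $\widehat{\C}(0) = 0$. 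This is exactly the defining system for $(\Z^{\lambda-\mu}_z, \C^{\lambda-\mu}_z)$ driven by $\widetilde{W}$; the shift $\lambda \mapsto \lambda - \mu$ emerges naturally because the drift depends only on the current value of $\C$, so its initial value $\mu$ can be absorbed into the parameter.

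Second, pathwise uniqueness from Lemma \ref{lem:uniquenessLemma} upgrades to uniqueness in law by Yamada--Watanabe, so $(\widehat{\Z}, \widehat{\C}) \overset{d}{=} (\Z^{\lambda-\mu}_z, \C^{\lambda-\mu}_z)$, which is \eqref{eqn:selfsim.1} after translating back by $\mu$ (reading the right-hand side with the convention, fixed by the definition preceding the theorem, that the $\C$-component of $(\Z^{\lambda-\mu}_z, \C^{\lambda-\mu}_z)$ starts from $0$).

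The main technical concern is not the SDE manipulation but making the conditioning on the event $\{\Z^\lambda_x(t_0) = z,\ \C^\lambda_x(t_0) = \mu\}$ (generally of probability zero) rigorous. The cleanest way is to avoid literal conditioning: by time-homogeneity, one instead restarts the SDE from deterministic initial data $(z,\mu)$, identifies its law with the transition kernel $P_t\bigl((x,0),\,\cdot\,\bigr)$ evaluated on Borel cylinders of the path space, and appeals to uniqueness of this kernel, which follows from the pathwise uniqueness of Lemma \ref{lem:uniquenessLemma}. Once this bookkeeping is in place, the change of variables in the previous paragraph completes the proof.
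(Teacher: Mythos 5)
Your proof is correct, but it takes a genuinely different route from the paper's. The paper proves the theorem by passing through the Lamperti-type representation of Lemma \ref{lem:uniquenessLemma}(2): it writes $\Z^\lambda_x(t) = x + \X^\lambda(\C^\lambda_x(t)\wedge T_{-x})$, decomposes the parabolic Brownian motion as $\X^\lambda(s_0+s) = \X^\lambda(s_0) + \tilde{\X}^{\lambda-s_0}(s)$ with $\tilde{\X}^{\lambda-s_0}$ independent of the past, substitutes the \emph{random} time $s_0 = \C^\lambda_x(t_0)$ to get, conditionally on $(\Z^\lambda_x(t_0),\C^\lambda_x(t_0)) = (z,\mu)$, the identity $\Z^\lambda_x(t_0+t) = z + \tilde{\X}^{\lambda-\mu}\bigl(\int_0^t \Z^\lambda_x(t_0+s)\,ds\bigr)$, and then converts this back into the SDE statement via Lemma \ref{lem:uniquenessLemma}(3). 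You instead shift the SDE itself at the deterministic time $t_0$, absorb the value $\mu$ of $\C$ into the drift parameter, and conclude by pathwise uniqueness and Yamada--Watanabe. Your route is more direct and sidesteps a point the paper leaves implicit, namely that taking $s_0 = \C^\lambda_x(t_0)$ requires the strong Markov property of the driving Brownian motion at a stopping time in the $\X^\lambda$-clock, whereas your $\widetilde W$ is independent of $\F_{t_0}$ for free because $t_0$ is deterministic. What the paper's route buys is the explicit parallel with Aldous's decomposition of the excursion measure of $\X^\lambda$, which is the stated motivation at the start of Section \ref{sec:ss}. You are also right to flag the two bookkeeping issues: the null-event conditioning is correctly resolved through the Markov transition kernel as you describe, and the $\C$-component of the identity must be read as $\C^\lambda_x(t_0+t)-\mu$ --- the paper's own proof only verifies the $\Z$-component and carries the same implicit translation.
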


\begin{proof}
	The proof follows from the decomposition in Lemma \ref{lem:uniquenessLemma}, particularly in equation \eqref{eqn:ctsTC}. Namely, there exists a Brownian motion with parabolic drift $\X^\lambda(t)$  such that
	\begin{equation*}
	\Z^\lambda_x(t) = x+ \X^{\lambda}(\C^\lambda_x(t) \wedge T_{-x}).
	\end{equation*}
	We also observe that
	\begin{align*}
	\X^\lambda(s_0+s)  &= B(s_0+s)+\lambda(s_0+s) -\frac{1}{2}(s_0+s)^2\\
	&= B(s_0)+B(s_0+s)-B(s_0) + \lambda s_0 + \lambda s- \frac{1}{2}s_0^2 - s_0 s - \frac{1}{2}s^2\\
	&= \X^\lambda(s_0) + \left(B(s_0+s)-B(s_0) + (\lambda-s_0) s - \frac{1}{2}s^2 \right)\\
	&= \X^\lambda(s_0) + \tilde{\X}^{\lambda-s_0}(s),
	\end{align*} for a process $\tilde{\X}^{\lambda-s_0} \overset{d}{=} \X^{\lambda-s_0}$ which is independent of $\sigma\left\{\X^\lambda(u); u\le s_0 \right\}$.
	Hence, we have
	\begin{align*}
	\Z^\lambda_x(t_0+t)&= x + \X^\lambda\left(\C^\lambda_x(t_0+t)\right)\\
	&= x+ \X^\lambda\left(\C^\lambda_x(t_0) + \int_0^t \Z_x^\lambda(t_0+s)\,ds \right) \\
	&= x + \X^\lambda(\C_x^\lambda(t_0)) +  \tilde{B}\left(\int_0^t \Z^\lambda_x(t_0+s)\,ds\right)  \\
	&\qquad\qquad\qquad\qquad+ \left(\lambda-\C_x^\lambda(t_0)\right) \int_0^t \Z^\lambda_x(t_0+s)\,ds -  \frac{1}{2}\left(\int_0^t \Z^\lambda_x(t_0+s)\,ds\right)^2,
	\end{align*} where $\tilde{B}$ is a Brownian motion independent of $\sigma\{\X^\lambda(u): u\le \C(t_0)\}$. Hence, conditionally on $\Z^\lambda_x (t_0)  = z$ and $\C^\lambda_x(t_0) = \mu$ gives
	$$
	\Z^\lambda_x(t_0+t) = z + \tilde{\X}^{\lambda- \mu}\left(\int_0^t \Z^\lambda_x(t_0+s)\,ds \right)
	$$ By Lemma \ref{lem:uniquenessLemma}, this is equivalent to the statement in \eqref{eqn:selfsim.1} .
\end{proof}

\section{A More General Asymptotic Regime}\label{sec:theta}

As observed by Bollob\'{a}s in \cite{B85}, the asymptotic order of largest component of the Erd\H{o}s-R\'{e}nyi random graph $G(n,n^{-1}+\lambda\log(n)^{1/2}n^{-4/3})$ is $n^{2/3}(\log n)^{1/2}$ as $n\to\infty$. Actually, he proves a much more general result, but we will not state that fully here. We instead examine a more general asymptotic regime.

We consider any sequence of real numbers $\theta_n$ such that 
\begin{equation}
\label{eqn:thetan}
\theta_n = o(n^{1/3}), \qquad\text{and}\qquad \lim_{n\to\infty} \theta_n =\infty.
\end{equation} In the introduction we used the notation $\eps_n$ instead of $\theta_n$. The conditions in \eqref{eqn:thetan} can be reformulated for $\eps_n$ in the statement of Theorem \ref{thm:kconv_gen} by setting $$
\theta_n = n^{1/3} \eps_n.
$$ We also fix a $\lambda\in \R$ and let \begin{equation*}
\G_n^\theta = G(n,n^{-1}+\lambda \theta_n n^{-4/3}).\end{equation*}

To distinguish the notation, we let $Z^{\theta,k}_n(h)$ denote the height profile of $\G_n^\theta$ starting from $k$ uniformly chosen vertices (see Section \ref{sec:labeling} for more information on how this is constructed). With this notation, we can state the following theorem:
\begin{lem} \label{thm:genz} Fix $x>0$.
	Suppose that $\theta_n$ satisfies \eqref{eqn:thetan} and $k = k(n) = \fl{ \theta_n^2 n^{1/3}x}$. Then the following convergence holds in the Skorohod space $\D(\R_+,\R_+)$
	\begin{equation}\label{eqn:ztheta}
	\left( \frac{1}{\theta_n^2 n^{1/3}} Z_n^{\theta,k} \left( \fl{ \theta_n^{-1} n^{1/3} t}\right) ;t\ge0\right) \Longrightarrow \left(z(t);t\ge0 \right),
	\end{equation} where $z$ solves the deterministic equation
	\begin{equation}\label{eqn:detz}
	z(t) = f\left(\int_0^t z(s)\,ds\right),\qquad f(t) =x + \lambda t -\frac{1}{2}t^2.
	\end{equation}
\end{lem}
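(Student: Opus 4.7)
The plan is to mimic the Ethier--Kurtz scheme that yields Lemma \ref{thm:zconv}, with the crucial difference that in this regime the rescaled martingale part will vanish in the limit, leaving the deterministic equation \eqref{eqn:detz}. First I would prove an analog of Lemma \ref{lem:asympt} tailored to the new window. Let $\beta(n,z,c) \sim \text{Bin}(n-c, q^\theta(n,z))$ with $q^\theta(n,z) = 1-(1-n^{-1}-\lambda \theta_n n^{-4/3})^z$, and restrict to $\Omega_n^\theta = \{(z,c) : 0 \le z \le \theta_n^2 n^{1/3} r, \, 0 \le c \le \theta_n n^{2/3} Tr\}$. Because $\theta_n = o(n^{1/3})$, the product $(n^{-1}+\lambda\theta_n n^{-4/3})z$ is uniformly $O(\theta_n^2 n^{-2/3}) = o(1)$ on $\Omega_n^\theta$, so the binomial expansion in the proof of Lemma \ref{lem:asympt} can be truncated after its leading terms to yield, uniformly over $\Omega_n^\theta$,
\begin{equation*}
\mu - z = z\bigl(\lambda\theta_n n^{-1/3} - n^{-1}c\bigr) + O\bigl(\theta_n^4 n^{-1/3}\bigr), \quad \sigma^2 = z + O\bigl(\theta_n^4 n^{-1/3}\bigr), \quad \kappa = O\bigl(\theta_n^4 n^{2/3}\bigr).
\end{equation*}

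Next, set $\tilde Z_n(t) = (\theta_n^2 n^{1/3})^{-1} Z_n^{\theta,k}(\lfloor \theta_n^{-1} n^{1/3} t\rfloor)$ and $\tilde C_n(t) = (\theta_n n^{2/3})^{-1} \sum_{h=0}^{\lfloor \theta_n^{-1}n^{1/3}t\rfloor} Z_n^{\theta,k}(h)$, and Doob-decompose $\tilde Z_n = \tilde Z_n(0) + \tilde M_n + \tilde B_n$. Tracking scales, one step contributes drift $\mu - z \approx z \theta_n n^{-1/3} (\lambda - c/(\theta_n n^{2/3}))$; accumulating over the $\theta_n^{-1} n^{1/3}$ steps per unit rescaled time and dividing by the space scale $\theta_n^2 n^{1/3}$ gives, when stopped at the exit time $\tau_n(r)$ from $\Omega_n^\theta$,
\begin{equation*}
\sup_{t \le T \wedge \tau_n(r)} \Bigl| \tilde B_n(t) - \int_0^t \tilde Z_n(s)\bigl(\lambda - \tilde C_n(s)\bigr)\,ds \Bigr| \longrightarrow 0 \quad \text{a.s.}
\end{equation*}
The predictable quadratic variation of $\tilde M_n$ scales as $\theta_n^{-3}\int_0^t \tilde Z_n(s)\,ds$, so $\sup_{t\le T\wedge\tau_n(r)} |\tilde M_n(t)| \to 0$ in $L^2$ by Doob's inequality. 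The jump estimate $\sup |\Delta \tilde Z_n|^2 \to 0$ follows from the bound on $\kappa$ exactly as in Lemma \ref{lem:ekVerify}(1), now using the new space scale.

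From here the structure is the same as the proof of Lemma \ref{thm:zconv}: the stopped processes $(\tilde Z_n(\cdot \wedge \tau_n(r)), \tilde C_n(\cdot \wedge \tau_n(r)))$ are tight in the Skorohod space, any subsequential limit $(\tilde Z, \tilde C)$ has continuous paths, and by the convergence of $\tilde B_n$ and the vanishing of $\tilde M_n$ any limit satisfies
\begin{equation*}
\tilde Z(t) = x + \int_0^t \tilde Z(s)\bigl(\lambda - \tilde C(s)\bigr)\,ds, \qquad \tilde C(t) = \int_0^t \tilde Z(s)\,ds.
\end{equation*}
Substituting $d\tilde C = \tilde Z\,dt$ and integrating rewrites the first line as $\tilde Z(t) = x + \lambda \tilde C(t) - \tfrac{1}{2}\tilde C(t)^2 = f(\tilde C(t))$, which is \eqref{eqn:detz}. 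Uniqueness is immediate because $\tilde C$ then satisfies the autonomous ODE $\dot{\tilde C} = f(\tilde C)$ with $\tilde C(0) = 0$ and locally Lipschitz right-hand side; the solution exists up to the first time $\tilde C$ hits the positive root $\lambda + \sqrt{\lambda^2+2x}$ of $f$, and is extended by constancy beyond. Finally the stopping at level $r$ is removed by choosing $r > \lambda + \sqrt{\lambda^2+2x}$, since the limit $(\tilde Z, \tilde C)$ is uniformly bounded in terms of $\lambda$ and $x$ alone.

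The main obstacle is purely technical: it lies in verifying that the higher-order terms in the binomial expansion remain negligible uniformly over the enlarged rectangle $\Omega_n^\theta$, which is precisely where the assumption $\theta_n = o(n^{1/3})$ is consumed. Once that asymptotic is in place the remainder is if anything simpler than the corresponding step in Lemma \ref{thm:zconv}, since the limit is deterministic and no analog of Dambis--Dubins--Schwarz is required to identify it.
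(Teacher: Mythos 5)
Your proposal follows essentially the same route as the paper: one proves an analog of Lemma \ref{lem:asympt} on the enlarged rectangle $\Omega_n^\theta$ (this is Lemma \ref{lem:asympt2}), verifies the Ethier--Kurtz conditions with the rescaled quadratic variation now vanishing at rate $\theta_n^{-3}$ (Lemma \ref{lem:ekVerify2}), and concludes via \cite[Theorem 7.4.1]{EK86}, exactly as you outline. One small correction: your fourth-moment bound $\kappa = O(\theta_n^4 n^{2/3})$ omits the contribution $(\mu-z)^4 = O(\theta_n^{12})$, which dominates when $\theta_n \gg n^{1/12}$; the correct bound $O(\theta_n^{12} + \theta_n^8 n^{1/3} + \theta_n^4 n^{2/3})$ still yields the jump condition, since $\theta_n^{-9}n^{-1}\cdot\theta_n^{12} = \theta_n^3 n^{-1} \to 0$ by $\theta_n = o(n^{1/3})$.
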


The proof follows from lemmas similar to the lemmas found in Section \ref{sec:zthm}. Before stating those lemmas, we make some comments on the solution $z(t)$ found in \eqref{eqn:detz}. We have already mentioned that $$
c(t) = \int_0^t z(s)\,ds = \inf\{s: \int_0^s \frac{1}{f(u)}\,du = t\}.
$$ See also, \cite[Section 2]{CPU13} and \cite[Section 6.1]{EK86} for more details on time changes. We have ``$=t$'' instead of ``$>t$" because the inverse is actually a two-sided inverse. Indeed, since $\int_0^{t_0} \frac{1}{f(u)}\,du = \infty$ where $t_0 = \lambda+\sqrt{2x+\lambda^2}$ is the largest root of $f(t)$, the function $c$ is strictly increasing continuous function $c:[0,\infty)\to [0,\lambda+\sqrt{2x+\lambda^2})$. The function $c$ can actually be explicitly computed:
\begin{equation*}
c(t) = \lambda + \sqrt{2x+\lambda^2} \tanh\left(\frac{\sqrt{2x+\lambda^2}}{2} t + \operatorname{arctanh}\left(\frac{-\lambda}{\sqrt{2x+\lambda^2}} \right) \right).
\end{equation*}

We also make comments on the scaling found in Theorem \ref{thm:genz}. In order to describe this scaling, we introduce the diameter of the graph $\G_n^\theta$, as
\begin{equation*}
\mathscr{D}_n^\theta = \mathscr{D}_n^{\theta_n} = \max_{u,v\in \G_n^\theta}\left\{ \dist(u,v): \dist(u,v)<\infty\right\}.
\end{equation*} The trivial observation is that $Z_n^{\theta,k}(h)>0$ implies that $\mathscr{D}^\theta_n\ge h$. 
A result of {\L}uczak \cite[Theorem 11(iii)]{Luczak98} implies when $\lambda<0$ that
\begin{equation*}
\mathscr{D}_n^\theta = \frac{\log(2\theta_n^3)+O(1)}{-\log(1-{\theta_n}n^{-1/3})} 
\end{equation*} with high probability as $n\to\infty$. There is a typo in the statement of Theorem \cite[Theorem 11(iii)]{Luczak98}, he writes an $\log(2\eps^2n)$ term when there should be an $\log(2\eps^3n)$ term. In the supercritical ($\lambda>0$) regime, it appears that the work of Ding, Kim, Lubetzky and Peres \cite{DKLP10,DKLP11} provide more precise results. Namely, they show \cite[Theorem 1.1]{DKLP11} that if $\c_n^\theta$ is the largest component of $\G_n^\theta$, for $\lambda>0$, then with high probability
\begin{equation*}
\text{diam}(\c_n^\theta) = (3+o(1))n^{1/3}\theta_n^{-1} \log (\theta_n^3)\qquad\text{as }n\to\infty.
\end{equation*}
Even more precise asymptotic result in this regime can be found in \cite{RW10}, again in the supercritical regime when $\lambda>0$. 

Both of these results on the asymptotic diameter $\mathscr{D}_n^\theta$ suggest the proper ``time'' scaling in Theorem \ref{thm:genz} should be $\theta_n^{-1}n^{1/3}\log(\theta_n)t$ as compared with $\theta_n^{-1}n^{1/3}t$; however, this is not the correct scaling to obtain a non-trivial limit.

\subsection{Lemmas}

In the connection to the Reed-Frost model of epidemics, it is easy to see that the analog of \eqref{eqn:condDist} becomes the following
\begin{equation*}
\left(Z_n^{\theta,k}(h+1) \big| Z_n^{\theta,k}(h) = z, C_n^{\theta,k}(h) = c \right) \overset{d}{=} \left\{  \begin{array}{ll}\text{Bin} \left(n-c, q_\theta(n,z) \right) &: z>0, c<n\\
0 &: \text{else} 
\end{array}\right.,
\end{equation*} where $q_\theta(n,z)$ is defined as
\begin{equation*}
q_\theta(n,z) = 1-\left(1- n^{-1} -\lambda \theta_n n^{-4/3} \right)^z
\end{equation*}
The analog of Lemma \ref{lem:asympt} becomes the following
\begin{lem}\label{lem:asympt2}
	Let $\beta_\theta(n,z,c)$ denote a $\text{Bin}(n-c,q_\theta(n,z))$ random variable. Let $\mu_\theta,\sigma_\theta^2,\kappa_\theta$ denote the statistics in \eqref{eqn:betaStats} with $\beta_\theta$ replacing $\beta$. Fix $r>0$ and $T>0$ and define $$
	\Omega_n^\theta = \Omega_n^\theta(n,r,T):= \left\{(z,c)\in \mathbb{Z}^2: 0\le z\le n^{1/3}\theta_n^2 r, 0\le c\le n^{2/3}\theta_n rT \right\}
	$$ then the following bounds hold
	\begin{equation*}
	\begin{split}
	&\sup_{\Omega_n^\theta} \left|\mu_\theta(n,z,c) - z - n^{-1/3}z(\lambda \theta_n - n^{-2/3}c) \right| = O\left( \theta_n^4 n^{-1/3}+1\right)\\
	&\sup_{\Omega_n^\theta} \left|\sigma_\theta^2(n,z,c) - z - n^{-1/3}z(\lambda \theta_n - n^{-2/3}c) \right| = O\left( \theta_n^4 n^{-1/3}+1\right)\\
	&\sup_{\Omega_n^\theta} |\kappa_\theta(n,z,c) | = O(\theta_n^{12}+ \theta_n^8 n^{1/3} +\theta_n^4 n^{2/3}) 
	\end{split}
	\end{equation*}
\end{lem}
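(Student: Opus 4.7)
The proof should parallel the proof of Lemma \ref{lem:asympt}, but with careful tracking of how the factor $\theta_n$ enters each error estimate. First I would expand
$$\mu_\theta(n,z,c) = (n-c)\left(z(n^{-1}+\lambda\theta_n n^{-4/3}) - \sum_{j=2}^z \binom{z}{j}(-1)^j (n^{-1}+\lambda\theta_n n^{-4/3})^j\right)$$
via the binomial theorem and isolate the target $z + n^{-1/3}z(\lambda\theta_n - n^{-2/3}c)$. The remainder splits into a cross term $\lambda\theta_n n^{-4/3}zc$, which on $\Omega_n^\theta$ is bounded by $|\lambda|Tr^2 \theta_n^4 n^{-1/3}$, and a tail sum. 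For the tail I would apply $\binom{z}{j}\le (ez/j)^j$ together with $n^{-1}+\lambda\theta_n n^{-4/3}\le 2n^{-1}$ (valid for $n$ large, since $\theta_n=o(n^{1/3})$), reducing matters to a geometric series with ratio $2er\theta_n^2 n^{-2/3}$ which tends to zero. The dominant $j=2$ term then contributes $O(\theta_n^4 n^{-1/3})$, which is absorbed into the claimed bound. The extra $+1$ in the error is there to handle the regime where $\theta_n$ grows so slowly that $\theta_n^4 n^{-1/3}$ is itself small.

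For $\sigma_\theta^2$ I would use the identity $\sigma_\theta^2 = \mu_\theta(1-q_\theta) = \mu_\theta - \mu_\theta q_\theta$, reducing the problem to controlling $\mu_\theta q_\theta$. On $\Omega_n^\theta$ one has $q_\theta(n,z) \le (n^{-1}+\lambda\theta_n n^{-4/3})z\le 2r\theta_n^2 n^{-2/3}$ and the crude bound $\mu_\theta = O(n^{1/3}\theta_n^2)$ from the previous step, so $\mu_\theta q_\theta = O(\theta_n^4 n^{-1/3})$. Combining with the expansion of $\mu_\theta$ yields the claimed asymptotics for $\sigma_\theta^2$.

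For the fourth central moment I would reuse the decomposition
$$\kappa_\theta = \kappa_{\theta,4} + 4\kappa_{\theta,3} + 6\kappa_{\theta,2} + \kappa_{\theta,0}$$
from Lemma \ref{lem:asympt} (the linear term vanishes), and invoke the exact formulas $\E[(\beta-\mu)^2]=npq$, $\E[(\beta-\mu)^3]=npq(1-2p)$, $\E[(\beta-\mu)^4]=3(npq)^2+npq(1-6pq)$. Using the just-proved estimates, on $\Omega_n^\theta$ one has $|\mu_\theta-z| = O(\theta_n^3 + 1)$ (since $n^{-1/3}z\theta_n\le r\theta_n^3$ dominates on the relevant scale) and $\sigma_\theta^2 = O(n^{1/3}\theta_n^2)$. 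These give $\kappa_{\theta,0} = O(\theta_n^{12})$, $\kappa_{\theta,2} = O(\theta_n^8 n^{1/3})$, $\kappa_{\theta,3} = O(\theta_n^5 n^{1/3})$, and $\kappa_{\theta,4} = O(\sigma_\theta^2(1+\sigma_\theta^2)) = O(\theta_n^4 n^{2/3})$, whose sum is $O(\theta_n^{12} + \theta_n^8 n^{1/3}+ \theta_n^4 n^{2/3})$.

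The main obstacle is more bookkeeping than genuine difficulty: making sure the $\theta_n$-dependent error terms propagate consistently under the constraint $\theta_n = o(n^{1/3})$, so that the geometric-series tail in the $\mu_\theta$ expansion remains summable, and so that $(\mu_\theta - z)^4$ yields exactly $O(\theta_n^{12})$ rather than a worse power. Once the $\mu_\theta$ estimate is in place, the variance and fourth-moment bounds are essentially mechanical consequences of the explicit binomial moment formulas.
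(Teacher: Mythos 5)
Your proposal is correct and follows essentially the same route as the paper: the paper likewise carries over the Lemma 3.1 expansion for $\mu_\theta$ and $\sigma_\theta^2$ (it simply omits the details you supply), and for $\kappa_\theta$ it uses the identical decomposition $\kappa_{4,\theta}+4\kappa_{3,\theta}+6\kappa_{2,\theta}+\kappa_{0,\theta}$ with the same auxiliary bounds $|\mu_\theta - z| = O(\theta_n^3)$ and $\sigma_\theta^2 = O(\theta_n^2 n^{1/3})$. Your estimate $\kappa_{\theta,3}=O(\theta_n^5 n^{1/3})$ is in fact slightly sharper than the paper's stated $O(\theta_n^8 n^{1/3})$, and both are absorbed into the claimed total.
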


\begin{proof}
	The proofs of the convergence of $\mu_\theta$ and $\sigma_\theta^2$ follow from the same argument as in the proof of Lemma \ref{lem:asympt}, and we omit it here.
	
	We do argue the result for $\kappa_\theta$ since it is much more involved computationally. We again use the expansion:
	\begin{align*}
	\kappa_\theta(n,z,c) &= \E\left[(\beta_\theta(n,z,c)-\mu_\theta(n,z,c))^4 \right] + 4 \E\left[(\beta_\theta(n,z,c)-\mu_\theta(n,z,c))^3\right](\mu_\theta(n,z,c)-z) \\
	&\qquad + 6 \E\left[ (\beta_\theta(n,z,c)-\mu_\theta(n,z,c))^2 \right](\mu_\theta(n,z,c)-z)^2  \\
	&\qquad + 4\E\left[\beta_\theta(n,z,c)-\mu_\theta(n,z,c)\right](\mu_\theta(n,z,c)-z)^3\\
	&\qquad +(\mu_\theta(n,z,c)-z)^4\\
	&=: \kappa_{4,\theta}(n,z,c) + 4\kappa_{3,\theta}(n,z,c) + 6 \kappa_{2,\theta}(n,z,c) +0 + \kappa_{0,\theta}(n,z,c).
	\end{align*}
	
	We can use the bound for $\mu_\theta$ and Minkowski's inequality to get
	\begin{align*}
	\sup_{\Omega_n^\theta}\left|\kappa_{0,\theta}(n,z,c) \right| &\left(\mu_\theta(n,z,c) - z \right)^4\\
	&\le \left[\sup_{\Omega_n^\theta} \left|n^{-1/3} z(\lambda\theta_n-n^{-2/3}c) \right| + O(\theta_n^4n^{-1/3} + \theta_n^2 n^{-2/3})\right]^4\\
	&\le C \left( \sup_{\Omega_n^\theta} |n^{-1/3} z(\lambda\theta_n - n^{-2/3}c)|^4 + O(\theta_n^{16} n^{-4/3} + \theta_n^8 n^{-8/3}) \right)\\
	&= O\left(\theta_n^{12}  + \theta_n^{16} n^{-4/3} + \theta_n^8 n^{-8/3} \right) \le O(\theta_n^{12})
	\end{align*} where in the last inequality we used the bounds in \eqref{eqn:thetan}.
	
	The next three follow from the bounds below. They are easy to verify using the original bounds of $\sigma_\theta^2$ and $\mu_\theta$, and computations similar to the one above:
	\begin{align*}
	&\sup_{\Omega_n^\theta} \left|\mu_\theta(n,z,c) - z \right| = O\left(\theta_n^3 \right)\\
	&\sup_{\Omega_n^\theta} \left|\sigma_\theta^2(n,z,c) \right| = O\left(\theta_n^2 n^{1/3} + \theta_n^3 + \theta_n^4 n^{-1/3} \right)\\
	&\qquad\qquad\qquad\qquad  =  O\left(\theta_n^2 n^{1/3} \right)
	\end{align*}
	
	Using the same expansions as in Lemma \ref{lem:asympt}, we have
	\begin{align*}
	\sup_{\Omega_n^\theta} |\kappa_{2,\theta}(n,z,c)| &= O(\theta_n^6)\times O(\theta_n^2 n^{1/3}) = O(\theta_n^8 n^{1/3})\\
	\sup_{\Omega_n^\theta} |\kappa_{3,\theta}(n,z,c)| &= O(\theta_n^2 n^{1/3}) \times O(\theta_n^3) = O(\theta_n^8 n^{1/3})\\
	\sup_{\Omega_n^\theta} |\kappa_{4,\theta}(n,z,c)|& = O(\theta_n^2 n^{1/3})^2 = O(\theta_n^4 n^{2/3})
	\end{align*}
	
	This proves the desired bounds.
\end{proof}

One can use the bounds in the lemma above to prove an analog of Lemma \ref{lem:ekVerify}. We first establish some notation.
We now let $\F_n^{\theta,k}(h) = \sigma(Z_n^{\theta,k}(j),j\le h)$ be the filtration generated by $Z_n^{\theta,k}$ and let $Z_n^{\theta,k}(h) = M_n^{\theta,k}(h) + B_n^{\theta,k}(h)$ be the decomposition of $Z_n^{\theta,k}$ into an $\F_n^{\theta,k}(h)$-martingale $M_n^{\theta,k}$ and a process $B_n^{\theta,k}$. We also let $Q_n^{\theta,k}$ be the process which makes $(M_n^{\theta,k}(h))^2 - Q_n^{\theta,k}(h)$ an $\F_n^{\theta,k}(h)$-martingale. Define the rescaled processes, in comparison to \eqref{eqn:rescales},
\begin{equation}\label{eqn:rescales2} 
\begin{split}
\tilde{Z}_n^{\theta,k}(t) &= \theta_n^{-2} n^{-1/3}Z_n^{\theta,k}(\fl{ \theta_n^{-1} n^{1/3} t})\quad \tilde{C}_n^{\theta,k}(t) = \theta_n^{-1}n^{-2/3}C_n^{\theta,k}(\fl{ \theta_n^{-1} n^{1/3} t})\qquad \\
\tilde{M}_n^{\theta,k}(t) &= \theta_n^{-2}n^{-1/3}M^{\theta,k}_n(\fl{ \theta_n^{-1} n^{1/3} t})\quad \tilde{B}_n^{\theta,k}(t) =  \theta_n^{-2}n^{-1/3}B^{\theta,k}_n(\fl{ \theta_n^{-1} n^{1/3} t}) \\
\tilde{Q}_n^{\theta,k}(t)&= \theta_n^{-4} n^{-2/3}Q^{\theta,k}_n(\fl{ \theta_n^{-1} n^{1/3} t}).
\end{split}.
\end{equation} Also define $\tau_n^{\theta,k}(r) = \inf\{t:\tilde{Z}_n^{\theta,k}(t)\vee \tilde{Z}_n^{\theta,k}(t-)>r\}$ and $\hat{\tau}_n^{\theta,k}(r) = \theta_n n^{-1/3}\inf\{k: Z_n^{\theta,k}(h)>\theta_n^2 n^{1/3}r\}$.

The analog of Lemma \ref{lem:ekVerify} is the following lemma. The proof is omitted since it is similar to the proof of Lemma \ref{lem:ekVerify}.
\begin{lem} \label{lem:ekVerify2} Fix any $r>0$, $T>0$ and $x>0$. Let $k = k(n) = \fl{\theta_n^2 n^{1/3}x}$. 
	The following limits hold
	\begin{enumerate}
		\item $\displaystyle \lim_{n\to\infty}\E\left[ \sup_{t\le T\wedge \tau_n^{\theta,k}(r)} |\tilde{Z}_n^{\theta,k}(t)-\tilde{Z}_n^{\theta,k}(t-)|^2\right]  = 0$.
		\item $\displaystyle \lim_{n\to\infty}\E\left[ \sup_{t\le T\wedge \tau_n^{\theta,k}(r)} |\tilde{B}_n^{\theta,k}(t)-\tilde{B}_n^{\theta,k}(t-)|^2\right]  = 0$.
		\item $\displaystyle \lim_{n\to\infty}\E\left[ \sup_{t\le T\wedge \tau_n^{\theta,k}(r)} |\tilde{Q}_n^{\theta,k}(t)-\tilde{Q}_n^{\theta,k}(t-)| \right] = 0$.
		\item $\displaystyle \sup_{t\le T\wedge \tau_n^{\theta,k}(r)} \left|\tilde{Q}_n^{\theta,k}(t)  \right| \longrightarrow 0$, as $n\to\infty$ almost surely
		\item $\displaystyle  \sup_{t\le T\wedge \tau_n^{\theta,k}(r)}\left| \tilde{B}_n^{\theta,k}(t)- \int_0^t (\lambda -\tilde{C}_n^{\theta,k}(s))\tilde{Z}_n^{\theta,k}(s) \,ds\right|\overset{P}{\longrightarrow}0$, as $n\to\infty$.
	\end{enumerate}
\end{lem}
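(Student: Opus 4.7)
The plan is to mirror the proof of Lemma \ref{lem:ekVerify} step by step, substituting Lemma \ref{lem:asympt2} for Lemma \ref{lem:asympt} and the rescalings \eqref{eqn:rescales2} for \eqref{eqn:rescales}, then carefully checking that the resulting error terms still vanish under the two constraints $\theta_n \to \infty$ and $\theta_n = o(n^{1/3})$. Throughout, I would work on the event $\{t \le T \wedge \tau_n^{\theta,k}(r)\}$, which by construction keeps the pair $(Z_n^{\theta,k}(h), C_n^{\theta,k}(h))$ inside $\Omega_n^\theta(n,r,T)$ for every $h \le \theta_n^{-1} n^{1/3}(T \wedge \hat{\tau}_n^{\theta,k}(r))$, so the uniform bounds of Lemma \ref{lem:asympt2} apply throughout.

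For parts (1)--(3) I would reuse the original arguments verbatim, dominating a supremum of (squared) jumps by a sum of conditional moments and then plugging in the appropriate bound from Lemma \ref{lem:asympt2}. Explicitly, (1) follows by controlling a fourth moment via $\sup_{\Omega_n^\theta} \kappa_\theta = O(\theta_n^{12} + \theta_n^8 n^{1/3} + \theta_n^4 n^{2/3})$; (2) uses the derived bound $|\mu_\theta - z| = O(\theta_n^3)$ on $\Omega_n^\theta$; and (3) uses $|Q(h+1)-Q(h)| \le \sigma_\theta^2 + (\mu_\theta - z)^2 = O(\theta_n^2 n^{1/3})$. Inserting the rescaling factors $\theta_n^{-8} n^{-4/3}$, $\theta_n^{-4} n^{-2/3}$, and $\theta_n^{-4} n^{-2/3}$ respectively, and summing over $O(\theta_n^{-1} n^{1/3})$ jumps for (1), each resulting expression is straightforward to see vanishing under \eqref{eqn:thetan}.

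Part (4) is the structural departure from Lemma \ref{lem:ekVerify}: instead of converging to an integral of $\tilde{Z}_n^{\theta,k}$, the rescaled quadratic variation vanishes entirely. This reflects the fact that the scaling limit \eqref{eqn:detz} is deterministic -- no Brownian noise survives in the limit. The key calculation is $Q_n^{\theta,k}(h) = \sum_{j<h} \sigma_\theta^2(n, Z(j), C(j))$, which on the stopped region is at most $(\theta_n^{-1} n^{1/3} T) \cdot O(\theta_n^2 n^{1/3}) = O(\theta_n n^{2/3})$; dividing by $\theta_n^4 n^{2/3}$ produces a deterministic bound $O(\theta_n^{-3}) \to 0$ uniformly in $t$. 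It is precisely the mismatch between the martingale scaling $\theta_n^{-2} n^{-1/3}$ (which forces $\tilde{Q}$ to carry the factor $\theta_n^{-4} n^{-2/3}$) and the natural size $\theta_n n^{2/3}$ of $Q$ that produces the vanishing factor $\theta_n^{-3}$.

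For part (5) the two-step argument of the original goes through: first show that $B_n^{\theta,k}(h)$ is close to $\sum_{j<h} n^{-1/3} Z(j)(\lambda \theta_n - n^{-2/3} C(j))$ using the first bound of Lemma \ref{lem:asympt2}, then approximate the resulting rescaled Riemann sum by the stated integral. The per-step error $O(\theta_n^4 n^{-1/3} + 1)$ accumulates over $O(\theta_n^{-1} n^{1/3})$ terms and, after multiplying by the rescaling $\theta_n^{-2} n^{-1/3}$, yields a total error $O(\theta_n n^{-1/3} + \theta_n^{-3})$, which vanishes by \eqref{eqn:thetan}. Convergence here is only in probability (rather than almost surely as in Lemma \ref{lem:ekVerify}) because the cruder error bounds in this regime yield only $L^1$ control of the cumulative error. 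The main obstacle throughout is the bookkeeping: every term is a product of $\theta_n$-powers coming from the three bounds of Lemma \ref{lem:asympt2} and the rescalings \eqref{eqn:rescales2}, and one must carefully group factors to see that the gains from $\theta_n \to \infty$ (vanishing $\theta_n^{-k}$ terms) and from $\theta_n = o(n^{1/3})$ (vanishing $\theta_n^j n^{-j/3}$ terms) together drive every remainder to zero.
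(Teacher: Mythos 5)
Your proposal is correct and is exactly the argument the paper intends: the paper omits the proof of Lemma \ref{lem:ekVerify2}, saying only that it is similar to that of Lemma \ref{lem:ekVerify}, and your substitution of Lemma \ref{lem:asympt2} for Lemma \ref{lem:asympt} together with the exponent bookkeeping (in particular the $O(\theta_n^{-3})$ decay in part (4), reflecting the deterministic limit, and the $O(\theta_n n^{-1/3}+\theta_n^{-3})$ cumulative error in part (5)) is the right verification. One nitpick: in part (3) the bound $(\mu_\theta-z)^2=O(\theta_n^6)$ need not be dominated by $\theta_n^2 n^{1/3}$ when $\theta_n$ grows faster than $n^{1/12}$, so the one-step increment of $Q_n^{\theta,k}$ is $O(\theta_n^2 n^{1/3}+\theta_n^6)$ rather than $O(\theta_n^2 n^{1/3})$; after dividing by $\theta_n^4 n^{2/3}$ both terms still vanish under \eqref{eqn:thetan}, so the conclusion is unaffected.
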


Finally, using the machinery of \cite[Chapter 7]{EK86}, in particular Theorem 7.4.1, Lemma \ref{thm:genz} follows from Lemma \ref{lem:ekVerify2}.

\subsection{The Cumulative Cousin Process and Corollaries}

Just as we examined cousin process and the cumulative cousin process of the Erd\H{o}s-R\'{e}nyi random graph $\G_n$ and obtained a non-trivial rescaled limit, we get a similar result in this newer regime.

\begin{proof}[Proof of Theorem \ref{thm:kconv_gen} and Part (2) of Corollary \ref{cor:2}] We write $\theta_n = \eps_n n^{1/3}$. We write $C_n^{\theta,k}(h) = \sum_{j\le h} Z_n^{\theta,k}(j)$. The proof of the scaling of the $\csn$ statistic follows from a similar argument as in the proof of Theorem \ref{thm:kconv} with the only replacements being the scaling, and the scaling limits. We omit that here, but include the proof of the cumulative cousin statistics which does not rely on just integration.

Just as in the proof of Theorem \ref{thm:kconv}, we can write
\begin{align*}
K_n^{\eps,k}\circ C_n^{\theta,k}(h) = \sum_{\ell=0}^h \left(Z_n^{\theta,k}(\ell)\right)^2.
\end{align*}

Then 
\begin{align*}
\frac{1}{\eps_n^3 n^{2}}K_n^{\eps,k} \circ C_n^{\theta,k}(\fl{\eps_n^{-1}t}) &=\frac{1}{\theta_n^3 n} \int_0^{\fl{n^{1/3}\theta_n^{-1}t}} \left(Z_n^{\theta,k}(\fl{u}) \right)^2\,du\\
&=\frac{1}{\theta_n^3 n}\int_0^t \left(Z_n^{\theta,k}(\fl{n^{1/3}\theta_n^{-1} s}) \right)^2  n^{1/3} \theta_n^{-1}\,ds + o(1)\\
&=\int_0^t \frac{1}{\theta_n^4 n^{2/3}}\left( Z_n^{\theta,k}(\fl{n^{1/3} \theta_n^{-1}s}) \right)^2\,ds + o(1)
\\
& \Longrightarrow \int_0^t \left(z(s)\right)^2\,ds,
\end{align*} where the $o(1)$ term vanishes.

Just as in Step 3 of the proof of Theorem \ref{thm:kconv}, we can go from the convergence above to the convergence
\begin{equation*}
\left(\frac{1}{\eps_n^3 n^{8/3}} K_n^{\eps,k} (\fl{\eps_n n t});t\ge 0\right) \Longrightarrow \left(\int_0^{\inf\{u: c(u)>t\}} z(s)^2\,ds\right),
\end{equation*} where $c(t) = \int_0^t z(s)\,ds$. However, by \eqref{eqn:detz} and the paragraph thereafter,$$
z(t) = f\circ c(t), \qquad \text{where }f(t) = x+\lambda t -\frac{1}{2}t^2,
$$ and $c:[0,\infty) \to [0,\lambda + \sqrt{2x+\lambda^2})$.
Hence, by \cite[Chapter 0]{RY99},
$$
\int_0^{\inf\{u:c(u)>t\}} z(s)^2\,ds = \int_0^{c^{-1}(t)} f(c(s))\,dc(s) = \int_0^{t\wedge \lambda + \sqrt{2x+\lambda^2}} f(s)\,ds = xt+\frac{\lambda t^2}{2}-\frac{t^3}{6} \vee 0.
$$
\end{proof}

We can prove Proposition \ref{prop:components}, which is just a corollary of Lemma \ref{thm:genz}.

\begin{proof}[Proof of Proposition \ref{prop:components}]
The proof comes from the following general observation. If $f_n,f\in \D(\R_+,\R_+)\cap L^1(\R_+,dx)$ and $f_n\to f$ in the $J_1$ topology then
$$
\lim_{n\to\infty} \int_0^\infty f_n(t)\,dt \ge \lim_{n\to\infty} \int_0^T f_n(t)\,dt = \int_0^T f(t)\,dt.
$$ By taking $T$ large enough, once can make $\int_0^T f\,dt$ arbitrarily close to $\int_0^\infty f(s)\,ds.$

The proof is finished by the following observation, where we again write $\theta = \theta_n = n^{1/3}\eps_n$
\begin{align*}
n^{-1/3} \eps_n A_n^\eps(k) &= n^{-1/3}\eps_n\sum_{h\ge 0} Z_n^{\theta,k}(h)\\
&= n^{-2/3}\theta_n \sum_{h\ge 0} Z_n^{\theta,k}(h)\\
&= n^{-2/3} \theta_n \int_0^\infty Z_n^{\theta,k}(\fl{u})\,du\\
&= n^{-2/3} \theta_n \int_0^\infty Z_n^{\theta,k}(\fl{\theta_n^{-1}n^{1/3}}) \frac{n^{1/3}}{\theta_n}\,dt\\
&= \int_0^\infty \tilde{Z}_n^{\theta,k}(t)\,dt.
\end{align*} 
\end{proof}

\subsection{A Conjecture}

The scaling found in Corollary 1 in \cite{CPU17} tells us that under reasonable conditions, see \cite{CPU13}, if a breadth first walk $X_n = (X_n(k);k=0,1,\dotsm)$ has a rescaled limit in the Skorohod space
\begin{equation*}
\left(\frac{\alpha_n}{\gamma_n} X_n(\fl{\gamma_n t});t\ge0 \right) \Longrightarrow \left( X(t);t\ge0 \right)
\end{equation*} then the process $Z_n = (Z_n(h);h\ge0)$ defined as a solution to the difference equation
\begin{equation}\label{eqn:lamp.disc}
Z_n(h) = X_n\circ C_n(h-1),\qquad C_n(h) = \sum_{j=0}^h Z_n(j),
\end{equation} has the rescaled limit
\begin{equation}\label{eqn:lamp.cts}
\left(\frac{\alpha_n}{\gamma_n} Z_{n}(\fl{\alpha_n t});t\ge 0 \right) \Longrightarrow \left(Z(t);t\ge0 \right)
\end{equation} where is the unique solution to $$
Z(t) = X\left(\int_0^t Z(s)\,ds\right).
$$

Even though we have no breadth-first walk in this work where we can apply the discrete Lamperti transform \eqref{eqn:lamp.disc}, we did get the continuous analog \eqref{eqn:lamp.cts} and the breadth-first walk in \cite{A97_1} to formulate Lemma \ref{thm:zconv}. This is precisely the content of parts (2) and (3) of Lemma \ref{lem:uniquenessLemma}. We can ask the question, does the breadth-first walk for $\G_n^\theta$ which is constructed as Aldous constructs his walk in \cite{A97_1} satisfy a scaling limit? We formulate this as a conjecture:
\begin{conj}
	Suppose that $\theta_n$ satisfies \eqref{eqn:thetan}. Let $X_n = (X_n(k);k = 0,1,\dotsm)$ be the breadth-first walk on $\G_n^\theta$ described in \cite{A97_1} for the $\G_n$ model. Then, in the Skorohod space $\D(\R_+,\R)$ the following convergence holds
	\begin{equation*}
	\left(\frac{1}{n^{1/3} \theta_n^2} X_n(\fl{n^{2/3} \theta_n t});t\ge0 \right) \Longrightarrow \left(\lambda t -\frac{1}{2}t^2 ;t\ge0 \right)
	\end{equation*}
\end{conj}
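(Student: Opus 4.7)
The plan is to write Aldous's breadth-first walk $X_n$ as the sum of its predictable compensator $B_n$ and the compensated martingale $M_n$, and establish asymptotics for each, paralleling the approach that underlies Lemma \ref{lem:ekVerify2}. Recall from \cite{A97_1} that $X_n(k+1) - X_n(k) = c_{k+1} - 1$, where conditionally on the past, $c_{k+1}$ is binomial with $U_k = n - k - Q_k$ trials and success probability $p = n^{-1} + \lambda\theta_n n^{-4/3}$; here $Q_k = X_n(k) - \min_{j \leq k} X_n(j)$ is the queue size at time $k$. At the at-most $K$ new-component steps, the number of trials shifts by $\pm 1$, which contributes $O(\theta_n^{-1})$ to the rescaled drift and variance and may be absorbed into the error terms below.

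On the stopped event $\{\tau_n(r) > K\}$ with $\tau_n(r) := \inf\{j : Q_j > r\theta_n^2 n^{1/3}\}$ and $K = \fl{n^{2/3}\theta_n t}$, I would expand
\[
U_k p - 1 \;=\; \lambda \theta_n n^{-1/3} - \frac{k}{n} - \frac{Q_k}{n} + O\!\left(\lambda\theta_n(k+Q_k) n^{-4/3}\right).
\]
Summing over $k < K$ produces the leading terms $\lambda\theta_n^2 n^{1/3} t$ and $-\theta_n^2 n^{1/3} t^2/2$, with correction terms of orders $\theta_n^3$ and $\theta_n^4 n^{-1/3}$, both $o(\theta_n^2 n^{1/3})$ under $\theta_n \to \infty$ and $\theta_n = o(n^{1/3})$. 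Hence $B_n(K)/(\theta_n^2 n^{1/3}) \to \lambda t - t^2/2$ uniformly on $[0, T]$. For the martingale, $\langle M_n\rangle(K) \leq \sum_{k < K} U_k p(1-p) = O(n^{2/3}\theta_n)$, so Doob's $L^2$-maximal inequality yields $\E[\sup_{k\leq K} M_n(k)^2] = O(n^{2/3}\theta_n) = o((\theta_n^2 n^{1/3})^2)$, and the rescaled martingale vanishes uniformly in probability.

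The argument closes via a standard bootstrap. For $r > \sup_{t \leq T}(\lambda t - t^2/2) + |\inf_{t \leq T}(\lambda t - t^2/2)|$, any sample point at which $\tau_n(r) \leq K$ forces $Q_{\tau_n(r)} > r\theta_n^2 n^{1/3}$, while simultaneously the compensator-plus-martingale control up to $\tau_n(r)$ forces $Q_j = X_n(j) - \min_{i \leq j} X_n(i) \leq \theta_n^2 n^{1/3}(r - \delta)$ for some small $\delta > 0$, a contradiction on an event of probability tending to $1$. Thus $\P(\tau_n(r) > K) \to 1$, justifying the compensator expansion unconditionally, and Skorohod convergence to the continuous limit is immediate since the rescaled walk has jumps of size $O(\theta_n^{-2}) \to 0$. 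The main obstacle I anticipate is tracking the new-component modifications precisely in the construction of \cite{A97_1}; an appealing alternative that sidesteps this entirely would be to derive the conjecture from Lemma \ref{thm:genz} by inverting the Lamperti time-change between the walk and the height profile, following \cite{CPU13, CPU17}.
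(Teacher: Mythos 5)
The first thing to say is that the paper does not prove this statement: it is stated, and deliberately left, as a conjecture, so there is no proof of record to compare yours against. Judged on its own terms, your strategy is the natural one and I see no fatal gap. The Doob decomposition of Aldous's walk, the expansion $U_k p - 1 = \lambda\theta_n n^{-1/3} - k/n - Q_k/n + O\bigl(\theta_n(k+Q_k)n^{-4/3}\bigr)$, and the orders of the summed error terms (an $O(\theta_n^3)$ contribution from the queue term and an $O(\theta_n^4 n^{-1/3})$ contribution from the cross term, both $o(\theta_n^2 n^{1/3})$ under \eqref{eqn:thetan}) are all correct. The quadratic-variation bound $O(n^{2/3}\theta_n) = o(\theta_n^4 n^{2/3})$ does kill the martingale part, which is precisely why the limit is deterministic here while in the $\theta_n\equiv 1$ window it is $\X^\lambda$; your localization via $\tau_n(r)$ and the bootstrap showing $\P(\tau_n(r)>K)\to 1$ is the same device the paper uses for the height profile (the stopping times $\tau_n^{\theta,k}(r)$ in Lemma \ref{lem:ekVerify2}), and local uniform convergence to a continuous limit does give the claimed $J_1$ convergence on $\D(\R_+,\R)$.

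Two points still need to be written out, and you flag the first yourself: (i) the exact conditional law of the increments in Aldous's construction, including the $\pm1$ adjustments in the number of trials at new-component steps, must be matched precisely to \cite{A97_1}; as you note, these perturb the rescaled drift by a vanishing amount, but the bookkeeping is where an otherwise routine argument most easily goes wrong. (ii) The alternative you suggest --- deriving the conjecture from Lemma \ref{thm:genz} by inverting the Lamperti time-change --- is less straightforward than it sounds: the machinery of \cite{CPU13,CPU17} invoked in this paper passes from a walk to its time-changed height profile, not in the reverse direction, and inverting the time change at the level of weak convergence is delicate near the zero set of $z$. The direct martingale/fluid-limit argument you outline is cleaner and self-contained, and I would pursue that route.
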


\section*{Acknowledgement}

The author would like to thank David Aldous for suggesting the problem that lead to the paper and Soumik Pal for continued guidance during this project. The author would also like to thank Louigi Addario-Berry for suggesting to prove the convergence of $\csn$ statistic instead of its cumulative sum.

\bibliographystyle{abbrv}

\end{document}